\theoremstyle{plain}
\newtheorem*{thm*}{Theorem}
\newtheorem{thm}{Theorem}
\newtheorem*{lemma*}{Lemma}
\newtheorem{lemma}{Lemma}
\newtheorem*{prp*}{Proposition}
\newtheorem{prp}{Proposition}
\newtheorem{crl}{Corollary}
\theoremstyle{remark}
\newtheorem*{remark*}{Remark}
\title{Weak law of large numbers for linear processes}
\author[a]{Vaidotas Characiejus\footnote{Corresponding author.}}
\author[b]{Alfredas Ra\v ckauskas}
\affil[a]{Fakult\"at f\"ur Mathematik, Ruhr-Universit\"at Bochum, Universit\"atsstra{\ss}e 150, 44780~Bochum, Germany\authorcr e-mail: \href{mailto:vaidotas.characiejus@gmail.com}{vaidotas.characiejus@gmail.com}}
\affil[b]{Faculty of Mathematics and Informatics, Vilnius University, Naugarduko g. 24, 03225 Vilnius, Lithuania\authorcr e-mail: \href{mailto:alfredas.rackauskas@mif.vu.lt}{alfredas.rackauskas@mif.vu.lt}}
\begin{document}

\maketitle
\begin{abstract}
\noindent We establish sufficient conditions for the Marcinkiewicz-Zygmund type weak law of large numbers for a linear process $\{X_k:k\in\mathbb Z\}$ defined by $X_k=\sum_{j=0}^\infty\psi_j\varepsilon_{k-j}$ for $k\in\mathbb Z$, where $\{\psi_j:j\in\mathbb Z\}\subset\mathbb R$ and $\{\varepsilon_k:k\in\mathbb Z\}$ are independent and identically distributed random variables  such that $x^p\Pr\{|\varepsilon_0|>x\}\to0$ as $x\to\infty$ with $1<p<2$ and $\operatorname E\varepsilon_0=0$. We use an abstract norming sequence that does not grow faster than $n^{1/p}$ if $\sum|\psi_j|<\infty$. If $\sum|\psi_j|=\infty$, the abstract norming sequence might grow faster than $n^{1/p}$ as we illustrate with an example. Also, we investigate the rate of convergence in the Marcinkiewicz-Zygmund type weak law of large numbers for the linear process.
\let\thefootnote\relax\footnote{\emph{Keywords and phrases}: linear process, weak law of large numbers, Marcinkiewicz-Zygmund, short-range dependence, long-range dependence, infinite variance.}
\let\thefootnote\relax\footnote{\emph{MSC2010}: 60F99, 60G99.}
\end{abstract}

\section{Introduction and the main results}
Suppose that $0<p<2$ and let $\{\xi_k:k\ge1\}$ be independent and identically distributed (i.i.d.) random variables. The Marcinkiewicz-Zygmund type weak law of large numbers (M-Z WLLN) states that
\[
	\frac1{n^{1/p}}\sum_{k=1}^n\xi_k\to c\quad\text{in probability\quad as}\ n\to\infty
\]
if and only if $x^p\Pr\{|\xi_1|>x\}\to0$ as $x\to\infty$ and $c=0$ when $0<p<1$, $\operatorname E[\xi_1I_{\{|\xi_1|\le x\}}]\to c$ as $x\to\infty$ when $p=1$ and $\operatorname E\xi_1=c=0$ when $1<p<2$ (see \cite{feller1971} for $p=1$ and \cite{kallenberg1997} for $p\ne1$). 

We consider a linear process $\{X_k\}=\{X_k:k\in\mathbb Z\}$, i.e.\ random variables defined by
\begin{equation}\label{eq:linearprocess}
	X_k=\sum_{j=0}^\infty\psi_j\varepsilon_{k-j}
\end{equation}
for $k\in\mathbb Z$, where $\{\psi_j\}=\{\psi_j:j\in\mathbb Z\}\subset\mathbb R$ with the convention that $\psi_j=0$ if $j<0$  and $\{\varepsilon_k\}=\{\varepsilon_k:k\in\mathbb Z\}$ are i.i.d.\ random variables. Such process is also called an infinite-order moving average process. Let $\{S_n:n\ge1\}$ be the partial sums of the linear process $\{X_k\}$ given by $S_n=\sum_{k=1}^nX_k$ for $n\ge1$.

We establish sufficient conditions for the M-Z WLLN for the linear process $\{X_k\}$. The motivation for the problems that we investigate comes from the central limit theorem for the linear process $\{X_k\}$.

Suppose for the moment that $\sum\psi_j^2<\infty$, $\operatorname E\varepsilon_0^2<\infty$ and $\operatorname E\varepsilon_0=0$. Ibragimov established that $(\operatorname ES_n^2)^{-1/2}S_n$ converges in distribution to a standard normal random variable as $n\to\infty$ if $\operatorname ES_n^2\to\infty$ as $n\to\infty$ (see Theorem~2.5 of \cite{ibragimov1962} or Theorem~18.6.5 of \cite{ibragimov1971}).  However, the asymptotic behaviour of $\operatorname ES_n^2$ depends on the convergence of $\sum|\psi_j|$. If $\sum|\psi_j|<\infty$ and $\sum_{j=0}^\infty\psi_j\ne0$, then $n^{-1/2}S_n$ converges in distribution to a normal random variable as $n\to\infty$ (see Theorem~2.5 of \cite{ibragimov1962} and Theorem~3.11 of \cite{phillips1992}). If $\sum|\psi_j|=\infty$, the growth rate of the normalizing sequence might be higher than $n^{1/2}$. For example, if $\psi_j=(j+1)^{-d}$ for $j\ge0$ with $1/2<d<1$, then $n^{-(1/2+1-d)}S_n$ converges in distribution to a normal random variable as $n\to\infty$ (see Chapter 3 of \cite{giraitis2012} for more details). Using the same example, we see that the sequence $S_n/n^{1/p}$ does not converge to $0$ in probability for any $p$ such that $p\ge 1/(1/2+1-d)$ despite the fact that $x^p\Pr\{|X_0|>x\}\to0$ as $x\to\infty$ for $p<2$. Hence, the norming sequence $\{n^{1/p}:n\ge 1\}$ is not suitable for the M-Z WLLN for a general linear process.

Let us denote
\begin{equation}\label{eq:W_n(p)}
	\mathcal W_n(p)
	=\bigg(\sum_{j=-\infty}^n|w_{nj}|^p\biggl)^{1/p}
\end{equation}
for $n\ge 1$ and $p\ge1$, where $w_{nj}=\sum_{k=1}^n\psi_{k-j}$. $\mathcal W_n(p)$ is finite for $n\ge1$ and $p\ge1$ if $\sum|\psi_j|^p<\infty$. We now state our main result.
\begin{thm}\label{thm:wlln}
Let $\{\varepsilon_k\}$ be i.i.d.\ random variables such that $x^p\Pr\{|\varepsilon_0|>x\}\to0$ as $x\to\infty$ with $1<p<2$ and $\operatorname E\varepsilon_0=0$. Assume that $\{\psi_j\}\subset\mathbb R$ are such that $\sum|\psi_j|^p<\infty$. If $\mathcal W_n(p)\to\infty$ as $n\to\infty$, then
\[
	\frac{S_n}{\mathcal W_n(p)}\to 0\quad\text{in probability as}\ n\to\infty,
\]
where $\mathcal W_n(p)$ is given by \eqref{eq:W_n(p)} and $S_n$ is the $n$-th partial sum of the linear process $\{X_k\}$ defined by \eqref{eq:linearprocess}.
\end{thm}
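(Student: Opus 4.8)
The plan is to rewrite $S_n$ as a weighted sum of the i.i.d.\ innovations and then run a classical Feller-type truncation; the only nonroutine ingredient is a negligibility estimate for the coefficients that survives the long-range-dependent regime.

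First I would interchange the order of summation. Since each series $X_k=\sum_{j=0}^\infty\psi_j\varepsilon_{k-j}$ converges almost surely under the present hypotheses and $S_n$ is a finite sum of these, a routine rearrangement gives $S_n=\sum_{i=-\infty}^{n}w_{ni}\varepsilon_i$ with $w_{ni}=\sum_{k=1}^n\psi_{k-i}$ --- an almost surely convergent series of independent summands for which $\sum_{i\le n}|w_{ni}|^p=\mathcal W_n(p)^p$. Putting $a_{ni}=w_{ni}/\mathcal W_n(p)$, so that $\sum_i|a_{ni}|^p=1$, it suffices to show that $\sum_i a_{ni}\varepsilon_i\to0$ in probability.

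The crux --- and the step I expect to be the main obstacle --- is that the coefficients are asymptotically negligible, $\alpha_n:=\sup_i|a_{ni}|\to0$. The key observation is that the window sums $\Delta_r^{(n)}:=\sum_{s=r+1}^{r+n}\psi_s=w_{n,-r}$ vary slowly in $r$, because $\Delta_{r+1}^{(n)}-\Delta_r^{(n)}=\psi_{r+n+1}-\psi_{r+1}$ and, since $\{\psi_j\}\in\ell^p$, the sum of $|\psi_j|$ over any block of length $\ell$ lying beyond a suitably large fixed index $N$ is at most $\ell^{1-1/p}\bigl(\sum_{i\ge N}|\psi_i|^p\bigr)^{1/p}<\ell^{1-1/p}$. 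Suppose, toward a contradiction, that $\alpha_{n_m}\ge c>0$ along a subsequence. Then $D_{n_m}:=\sup_r|\Delta_r^{(n_m)}|=\alpha_{n_m}\mathcal W_{n_m}(p)\to\infty$ since $\mathcal W_{n_m}(p)\to\infty$. Let $r_m$ attain $D_{n_m}$ (attained because $(\Delta_r^{(n)})_r\in\ell^p$); shifting the base point to $r_m^{\ast}:=\max(r_m,N)$ changes $\Delta^{(n_m)}$ by at most a fixed constant (only finitely many $\psi_j$ from a fixed range and $\psi_j$ from a far tail are involved), so $|\Delta_{r_m^{\ast}}^{(n_m)}|>D_{n_m}/2$ for $m$ large; then the increment bound forces $|\Delta_{r_m^{\ast}+j}^{(n_m)}|>D_{n_m}/4$ for all $j=1,\dots,L_m$ with $L_m$ of order $D_{n_m}^{p/(p-1)}$. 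Hence $\mathcal W_{n_m}(p)^p\ge L_m(D_{n_m}/4)^p$, i.e.\ $\alpha_{n_m}=D_{n_m}/\mathcal W_{n_m}(p)\le 4L_m^{-1/p}\to0$, contradicting $\alpha_{n_m}\ge c$. This is the mechanism that keeps negligibility valid even when $\mathcal W_n(p)$ grows faster than $n^{1/p}$.

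With $\alpha_n\to0$ available, the remainder is the standard weak law for triangular arrays of row-independent variables. Write $g(x)=x^p\Pr\{|\varepsilon_0|>x\}$; it is bounded on $(0,\infty)$ and $g(x)\to0$ as $x\to\infty$, so $h(t):=\sup_{s\ge t}g(s)$ decreases to $0$. Truncate by $Y_{ni}=a_{ni}\varepsilon_iI_{\{|a_{ni}\varepsilon_i|\le1\}}$. Then: (i) $\Pr\{\sum_i a_{ni}\varepsilon_i\ne\sum_i Y_{ni}\}\le\sum_i\Pr\{|\varepsilon_0|>|a_{ni}|^{-1}\}=\sum_i|a_{ni}|^pg(|a_{ni}|^{-1})\le h(\alpha_n^{-1})\to0$; (ii) using $\operatorname E\varepsilon_0=0$ together with the tail bound $\operatorname E[|\varepsilon_0|I_{\{|\varepsilon_0|>t\}}]\le\frac{p}{p-1}t^{1-p}h(t)$, we find $\bigl|\sum_i\operatorname EY_{ni}\bigr|\le\sum_i|a_{ni}|\,\operatorname E[|\varepsilon_0|I_{\{|\varepsilon_0|>|a_{ni}|^{-1}\}}]\le\frac{p}{p-1}h(\alpha_n^{-1})\to0$; (iii) splitting the defining integral of $\operatorname E[\varepsilon_0^2I_{\{|\varepsilon_0|\le t\}}]$ at a level $A$ bounds it by $c_A+\frac{2h(A)}{2-p}t^{2-p}$, so that $\sum_i\operatorname{Var}(Y_{ni})\le\sum_i a_{ni}^2\,\operatorname E[\varepsilon_0^2I_{\{|\varepsilon_0|\le|a_{ni}|^{-1}\}}]\le c_A\sum_i a_{ni}^2+\frac{2h(A)}{2-p}$, and since $\sum_i a_{ni}^2\le\alpha_n^{2-p}\sum_i|a_{ni}|^p=\alpha_n^{2-p}\to0$, letting $n\to\infty$ and then $A\to\infty$ gives $\sum_i\operatorname{Var}(Y_{ni})\to0$. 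Therefore $\sum_i(Y_{ni}-\operatorname EY_{ni})\to0$ in $L^2$, hence in probability, and with (i) and (ii) this yields $\sum_i a_{ni}\varepsilon_i\to0$ in probability, i.e.\ $S_n/\mathcal W_n(p)\to0$ in probability. Here the hypothesis $x^p\Pr\{|\varepsilon_0|>x\}\to0$ --- rather than merely bounded --- is precisely what drives the truncated variances to $0$ in (iii), while the negligibility $\alpha_n\to0$ from the previous paragraph is what makes $\sum_i a_{ni}^2\to0$.
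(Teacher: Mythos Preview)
Your proof is correct and takes a somewhat different route from the paper's. For the key negligibility step $\sup_j|w_{nj}|/\mathcal W_n(p)\to0$ (the paper's Proposition~\ref{prp:sup}), the paper telescopes $|w_{nm}|^p$ along $j$ and combines the mean value theorem with H\"older's inequality to obtain the explicit bound $\sup_m|w_{nm}|^p\le p\,\mathcal W_n(p)^{p-1}\bigl(2^p\sum|\psi_j|^p\bigr)^{1/p}+2^pp\sum|\psi_j|^p$, from which the claim is immediate; your plateau argument (many neighbours of the maximizer stay large because the increments are $O(\ell^{1-1/p})$) reaches the same conclusion by contradiction. One wrinkle: your shift to $r_m^\ast=\max(r_m,N)$ is not obviously bounded by a fixed constant when $r_m$ is very negative, but the shift is unnecessary --- applying H\"older with the full sum $\sum|\psi_j|^p$ rather than a tail sum gives $|\Delta_{r+\ell}^{(n)}-\Delta_r^{(n)}|\le 2\ell^{1-1/p}\bigl(\sum|\psi_j|^p\bigr)^{1/p}$ directly from the maximizer $r_m$, and your contradiction then goes through verbatim. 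For the WLLN step, the paper truncates at a level containing a free parameter, bounds $S_n'$ by Chebyshev, and controls the tail part $S_n''$ via symmetrization and the weak-$L^p$ inequality $\bigl\|\sum\xi_i\bigr\|_{p,\infty}^p\le C(p)\sum\|\xi_i\|_{p,\infty}^p$ for independent symmetric summands; you instead run the classical Feller three-condition scheme (tail probabilities, truncated means, truncated variances), which is more elementary and avoids the weak-$L^p$ machinery altogether by exploiting $\sum_i|a_{ni}|^p=1$ together with $\sum_i a_{ni}^2\le\alpha_n^{2-p}\to0$.
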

There are examples of $\{\psi_j:j\in\mathbb Z\}$ such that $\mathcal W_n(p)$ does not go to infinity as $n\to\infty$ (for instance, $\psi_0=1$, $\psi_1=-1$ and $\psi_j=0$ for $j>1$).

Let us observe that $\mathcal W_n(2)=(\operatorname E\varepsilon_0^2)^{-1/2}\cdot(\operatorname ES_n^2)^{1/2}$ provided that $\sum_{j=0}^\infty\psi_j^2<\infty$, $\operatorname E\varepsilon_0^2<\infty$ and $\operatorname E\varepsilon_0=0$. Hence, the norming sequence $\mathcal W_n(p)$ is essentially an extension of the result of Ibragimov mentioned above to the weak LLN.

Suppose that $\psi_0=1$ and $\psi_j=0$ for $j>0$. Then $X_k=\varepsilon_k$ for $k\in\mathbb Z$ and $\mathcal W_n(p)=n^{1/p}$ for $n\ge1$ and $p>0$. Hence, $x^p\Pr\{|\varepsilon_0|>x\}\to0$ as $x\to\infty$ together with $\operatorname E\varepsilon_0=0$ is not only sufficient, but also necessary in this example. This example shows that the moment assumptions of $\{\varepsilon_k:k\in\mathbb Z\}$ in \autoref{thm:wlln} are sharp. Furthermore, the norming sequence $b_n(p)=n^{1/p}$ for $n\ge1$ with $1<p<2$ is optimal in this example.

Instead of assuming that $x^p\Pr\{|\varepsilon_0|>x\}\to0$ as $x\to\infty$, we can assume that $x^p\Pr\{|X_0|>x\}\to0$ as $x\to\infty$ since these two conditions are equivalent if $1<p<2$, $\operatorname E\varepsilon_0=0$ and $\sum|\psi|^p<\infty$ (see \autoref{prp:equiva} in \autoref{subsec:equiva}).

The growth rate of $\mathcal W_n(p)$ depends on the convergence of $\sum|\psi_j|$. If $\sum|\psi_j|<\infty$, we establish that $\mathcal W_n(p)=O(n^{1/p})$ as $n\to\infty$ (see \autoref{prp:W_n(p)bound} in \autoref{subsec:behaviour}) and obtain the following proposition.
\begin{prp}\label{prp:<}
Let $\{\varepsilon_k\}$ be i.i.d.\ random variables such that $x^p\Pr\{|\varepsilon_0|>x\}\to0$ as $x\to\infty$ with $1<p<2$ and $\operatorname E\varepsilon_0=0$. If $\{\psi_j\}\subset\mathbb R$ are such that $\sum|\psi_j|<\infty$, then
\[
	\frac{S_n}{n^{1/p}}\to 0\quad\text{in probability as}\ n\to\infty,
\]
where $S_n$ is the $n$-th partial sum of the linear process $\{X_k\}$ defined by \eqref{eq:linearprocess}.
\end{prp}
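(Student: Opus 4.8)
The plan is to deduce \autoref{prp:<} from \autoref{thm:wlln} together with the growth bound $\mathcal W_n(p)=O(n^{1/p})$ of \autoref{prp:W_n(p)bound}, and then to close the one case this misses by a direct truncation argument. By \autoref{prp:W_n(p)bound} there is a constant $C<\infty$ with $\mathcal W_n(p)\le Cn^{1/p}$ for all $n\ge1$. Hence, if $\mathcal W_n(p)\to\infty$, then \autoref{thm:wlln} gives $S_n/\mathcal W_n(p)\to0$ in probability, and writing
\[
	\frac{S_n}{n^{1/p}}=\frac{S_n}{\mathcal W_n(p)}\cdot\frac{\mathcal W_n(p)}{n^{1/p}}
\]
with the second factor bounded by $C$ shows $S_n/n^{1/p}\to0$ in probability. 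What remains is the case in which $\mathcal W_n(p)$ does \emph{not} tend to infinity, about which \autoref{thm:wlln} says nothing (this already occurs for $\psi_0=1$, $\psi_1=-1$, $\psi_j=0$ for $j>1$, where $\mathcal W_n(p)=2^{1/p}$ while $S_n=\varepsilon_n-\varepsilon_0$).

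For that case I would argue directly, and it is cleanest to do so in a way that handles \emph{every} $\{\psi_j\}$ with $\sum|\psi_j|<\infty$ at once, dispensing with the dichotomy altogether. Write $S_n=\sum_{i\le n}w_{ni}\varepsilon_i$ with $w_{ni}=\sum_{k=1}^n\psi_{k-i}$ (the interchange of sums is legitimate because $\operatorname E|\varepsilon_0|<\infty$, a consequence of $x^p\Pr\{|\varepsilon_0|>x\}\to0$ with $p>1$, and $\sum_i|w_{ni}|\le n\sum_j|\psi_j|<\infty$); summability of $\{\psi_j\}$ gives the three bookkeeping facts $\sup_i|w_{ni}|\le A$, $\sum_i|w_{ni}|\le nA$ and $\sum_iw_{ni}=nA_0$, where $A=\sum_j|\psi_j|$ and $A_0=\sum_j\psi_j$. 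Truncating at level $n^{1/p}$, set $\varepsilon_i'=\varepsilon_iI_{\{|\varepsilon_i|\le n^{1/p}\}}$ and decompose
\[
	S_n=\sum_{i\le n}w_{ni}\bigl(\varepsilon_i'-\operatorname E\varepsilon_i'\bigr)+\Bigl(\sum_{i\le n}w_{ni}\Bigr)\operatorname E\varepsilon_1'+\sum_{i\le n}w_{ni}\bigl(\varepsilon_i-\varepsilon_i'\bigr).
\]
The tail hypothesis supplies the standard estimates $\operatorname E[\varepsilon_0^2I_{\{|\varepsilon_0|\le t\}}]=o(t^{2-p})$ and $\operatorname E[|\varepsilon_0|I_{\{|\varepsilon_0|>t\}}]=o(t^{1-p})$ as $t\to\infty$, and $\operatorname E\varepsilon_0=0$ rewrites $\operatorname E\varepsilon_1'=-\operatorname E[\varepsilon_0I_{\{|\varepsilon_0|>n^{1/p}\}}]$. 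Chebyshev's inequality applied to the first sum (independent, centred terms) together with $\sum_iw_{ni}^2\le A\sum_i|w_{ni}|\le nA^2$ bounds its second moment by $nA^2\operatorname E[\varepsilon_0^2I_{\{|\varepsilon_0|\le n^{1/p}\}}]=o(n^{2/p})$; and the triangle inequality bounds the $L^1$-norms of the remaining two sums by $n|A_0|\operatorname E[|\varepsilon_0|I_{\{|\varepsilon_0|>n^{1/p}\}}]$ and $nA\operatorname E[|\varepsilon_0|I_{\{|\varepsilon_0|>n^{1/p}\}}]$, each equal to $n\cdot o(n^{(1-p)/p})=o(n^{1/p})$. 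Dividing through by $n^{1/p}$, each of the three pieces tends to $0$ in probability, and hence so does $S_n/n^{1/p}$.

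I expect the main obstacle to be conceptual rather than computational: recognising that \autoref{thm:wlln} genuinely leaves a gap (the regime $\mathcal W_n(p)\not\to\infty$) and that the right way to close it is to truncate at the normalising level $n^{1/p}$ rather than at $\mathcal W_n(p)$ — this is exactly what makes the large-deviation remainder $\sum_iw_{ni}(\varepsilon_i-\varepsilon_i')$ manageable, since the crude bound $\sum_i|w_{ni}|\le nA$ then balances against $n^{1/p}$, whereas with the finer normalisation $\mathcal W_n(p)$ it need not, which is precisely why \autoref{thm:wlln} itself must assume $\mathcal W_n(p)\to\infty$. The only mildly fiddly points will be the exponent arithmetic above and the routine check that the infinite sum $\sum_iw_{ni}(\varepsilon_i'-\operatorname E\varepsilon_i')$ converges in $L^2$, which it does since $\sum_iw_{ni}^2\operatorname{Var}(\varepsilon_1')<\infty$.
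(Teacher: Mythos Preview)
Your argument is correct. The paper, however, does not route \autoref{prp:<} through \autoref{thm:wlln} at all: it first isolates a single lemma (\autoref{lemma:b_n(p)}) which says that for \emph{any} normalising sequence $\{b_n(p)\}$ satisfying $\sup_{j\le n}|w_{nj}|/b_n(p)\to0$ and $\sup_n\mathcal W_n(p)/b_n(p)<\infty$, one has $S_n/b_n(p)\to0$ in probability. \autoref{thm:wlln} then follows by taking $b_n(p)=\mathcal W_n(p)$ (invoking \autoref{prp:sup} for the first condition), while \autoref{prp:<} follows directly by taking $b_n(p)=n^{1/p}$: the second condition is \autoref{prp:W_n(p)bound}, and the first holds trivially since $\sup_j|w_{nj}|\le\sum_j|\psi_j|<\infty$ while $n^{1/p}\to\infty$. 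So the paper never meets the case $\mathcal W_n(p)\not\to\infty$ that you carefully close by hand. Your direct argument is in fact more elementary than the paper's---uniform truncation at level $n^{1/p}$ and plain $L^1$/$L^2$ moment bounds, with no need for the weak-$L^p$ quasi-norm machinery and symmetrization that the proof of \autoref{lemma:b_n(p)} invokes---and it succeeds precisely because absolute summability supplies the crude estimates $\sup_j|w_{nj}|\le A$ and $\sum_j|w_{nj}|\le nA$ that you exploit. What the paper's route buys is unification: a single truncation lemma covers both the short-memory and the long-memory normalisations at once.
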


If $0<p\le 1$, $\sum|\psi_j|^p<\infty$, $\operatorname E|\varepsilon_0|^p<\infty$ and $\operatorname E\varepsilon_0=0$ when $p=1$, we have that $n^{-1/p}S_n\to0$ almost surely as $n\to\infty$. The proof of this fact follows from the Marcinkiewicz-Zygmund strong law of large numbers (M-Z SLLN) for i.i.d.\ random variables since the assumption of independence is superfluous when $0<p<1$ (see \cite{marcinkiewicz1937}).  For the case when $p=1$, see Corollary~2.1.3 and Example~2.1.4 of \cite{straumann2005}.

Thus, a linear process $\{X_k\}$ is short-range dependent or has short memory with respect to the M-Z WLLN if $\sum|\psi_j|<\infty$.

If $\sum|\psi_j|=\infty$, the sequence $\mathcal W_n(p)$ might grow faster than $n^{1/p}$.  As an example, we consider $\psi_j=(j+1)^{-d}$ for $j\ge0$ with $1/p<d<1$. Then $\mathcal W_n(p)\sim c\cdot n^{1/p+1-d}$ as $n\to\infty$ with a positive constant $c$ (see \autoref{prp:b_n(p)pw} in \autoref{subsec:behaviour}) and the linear processes $\{X_k\}$ is long-range dependent or has long memory with respect to the M-Z WLLN. We obtain the following corollary of \autoref{thm:wlln}.
\begin{crl}
Let $\{\varepsilon_k\}$ be i.i.d.\ random variables such that $x^p\Pr\{|\varepsilon_0|>x\}\to0$ as $x\to\infty$ with $1<p<2$ and $\operatorname E\varepsilon_0=0$. Suppose that $\{\psi_j\}\subset\mathbb R$ is defined by $\psi_j=(j+1)^{-d}$ for $j\ge0$ with $1/p<d<1$. Then
\[
	\frac{S_n}{n^{1/p+1-d}}\to 0\quad\text{in probability as}\ n\to\infty,
\]
where $S_n$ is the $n$-th partial sum of the linear process $\{X_k\}$ defined by \eqref{eq:linearprocess}.
\end{crl}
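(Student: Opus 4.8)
The plan is to deduce this corollary directly from \autoref{thm:wlln}, using the asymptotics of the norming sequence recorded in \autoref{prp:b_n(p)pw}. First I would verify the hypotheses of \autoref{thm:wlln}. The moment condition on $\{\varepsilon_k\}$ is assumed verbatim, so only the coefficient condition needs checking: since $\psi_j=(j+1)^{-d}$ for $j\ge0$ and $dp>1$ (because $d>1/p$), we have $\sum|\psi_j|^p=\sum_{j\ge0}(j+1)^{-dp}<\infty$. Next I would invoke \autoref{prp:b_n(p)pw}, which states that $\mathcal W_n(p)\sim c\,n^{1/p+1-d}$ as $n\to\infty$ for some constant $c>0$; since $p>1$ and $d<1$ the exponent $1/p+1-d$ is strictly positive, so $\mathcal W_n(p)\to\infty$. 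Hence \autoref{thm:wlln} applies and gives $S_n/\mathcal W_n(p)\to0$ in probability.

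It then remains to replace the norming sequence $\mathcal W_n(p)$ by $n^{1/p+1-d}$. I would write
\[
	\frac{S_n}{n^{1/p+1-d}}=\frac{S_n}{\mathcal W_n(p)}\cdot\frac{\mathcal W_n(p)}{n^{1/p+1-d}},
\]
where the first factor tends to $0$ in probability by the previous step and the second factor is a deterministic sequence converging to the finite constant $c$, again by \autoref{prp:b_n(p)pw}. A convergent deterministic sequence is bounded, say by $M$ for all large $n$, so for any fixed $\delta>0$ one has $\Pr\{|S_n/n^{1/p+1-d}|>\delta\}\le\Pr\{|S_n/\mathcal W_n(p)|>\delta/M\}\to0$; equivalently, this is a Slutsky-type argument for the product of a null-in-probability sequence with a convergent one. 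This yields the stated conclusion.

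I do not expect any genuine obstacle: all the analytic content is already packaged in \autoref{thm:wlln} and in the exact growth rate of $\mathcal W_n(p)$ from \autoref{prp:b_n(p)pw}. The only points requiring a moment's care are that the constant $c$ produced by \autoref{prp:b_n(p)pw} is strictly positive and finite, and that the exponent $1/p+1-d$ is positive — both of which follow immediately from the assumption $1/p<d<1$.
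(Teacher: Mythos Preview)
Your proposal is correct and matches the paper's intended argument: the corollary is stated immediately after the sentence ``We obtain the following corollary of \autoref{thm:wlln}'' with the asymptotic $\mathcal W_n(p)\sim c\cdot n^{1/p+1-d}$ from \autoref{prp:b_n(p)pw} having just been recorded, and no separate proof is given. Your verification that $\sum|\psi_j|^p<\infty$ (since $dp>1$) and the Slutsky-type passage from $\mathcal W_n(p)$ to $n^{1/p+1-d}$ fill in exactly the details the paper leaves implicit.
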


In the next proposition, we establish the rate of convergence in the M-Z WLLN for a linear process under stronger assumptions than in \autoref{thm:wlln}.\begin{prp}\label{prp:rate}
Let $\{\varepsilon_k\}$ be i.i.d.\ random variables such that $\operatorname E[|\varepsilon_0|^p\log(1+|\varepsilon_0|)]<\infty$ with $1<p<2$ and $\operatorname E\varepsilon_0=0$. Assume that $\{\psi_j\}\subset\mathbb R$ are such that $\sum|\psi_j|^p<\infty$. If there exists $p<q\le 2$ such that
\begin{equation}\label{eq:condrate}
	\frac{\mathcal W_n(q)}{\mathcal W_n(p)}=O(n^{1/q-1/p})\quad\text{as}\quad n\to\infty,
\end{equation}
then
\[
	\sum_{n=1}^\infty n^{-1}\Pr\{|\mathcal W_n^{-1}(p)S_n|>\delta\}<\infty
\]
for each $\delta>0$, where $\mathcal W_n(p)$ is given by \eqref{eq:W_n(p)} and $S_n$ is the $n$-th partial sum of the linear process $\{X_k\}$ defined by \eqref{eq:linearprocess}.
\end{prp}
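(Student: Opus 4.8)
The plan is to prove a complete-convergence-type statement, so the natural route is a truncation argument controlled by a maximal inequality rather than by Chebyshev alone. Write $S_n = \sum_{j=-\infty}^n w_{nj}\varepsilon_j$, so that the partial sum of the linear process is itself a weighted sum of the i.i.d.\ innovations with weights $w_{nj}$. Fix $\delta>0$ and, for each $n$, truncate the innovations at level $\mathcal W_n(p)$: set $\varepsilon_j' = \varepsilon_j I_{\{|\varepsilon_j|\le \mathcal W_n(p)\}}$ and $\varepsilon_j'' = \varepsilon_j - \varepsilon_j'$. Correspondingly split $S_n = S_n' + S_n''$ with $S_n' = \sum_j w_{nj}\varepsilon_j'$ and $S_n'' = \sum_j w_{nj}\varepsilon_j''$. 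It then suffices to show that $\sum_n n^{-1}\Pr\{|\mathcal W_n^{-1}(p)S_n'| > \delta/3\}$, the analogous series for the centering term $\sum_n n^{-1}\Pr\{|\mathcal W_n^{-1}(p)\operatorname E S_n'| > \delta/3\}$, and $\sum_n n^{-1}\Pr\{S_n'' \ne 0\}$ are all finite; the middle one reduces to a deterministic estimate.

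\emph{The tail part.} Since $\operatorname E\varepsilon_0 = 0$, we have $\operatorname E\varepsilon_j'' = -\operatorname E\varepsilon_j'$, so $S_n'' \ne 0$ forces some $|\varepsilon_j| > \mathcal W_n(p)$ with $w_{nj}\ne 0$. Thus $\Pr\{S_n''\ne 0\} \le \sum_{j}\Pr\{|\varepsilon_j| > \mathcal W_n(p), w_{nj}\ne 0\} = \#\{j: w_{nj}\ne 0\}\cdot\Pr\{|\varepsilon_0| > \mathcal W_n(p)\}$, but this crude count is too lossy. Instead I would bound $\Pr\{S_n''\ne 0\} \le \sum_j \Pr\{|\varepsilon_j| > \mathcal W_n(p)\} I_{\{w_{nj}\ne 0\}}$ and relate the number of nonzero $w_{nj}$ and the size $\mathcal W_n(p)$ via $|w_{nj}|\le \sum_k|\psi_{k-j}|$. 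A cleaner approach: use that $\sum_j |w_{nj}|^p = \mathcal W_n(p)^p$ and that $|w_{nj}| \le \|\psi\|_1 \wedge (\text{something})$; more robustly, one sums the tail over $j$ weighted by $|w_{nj}|^p/\mathcal W_n(p)^p$ after noting $\Pr\{|\varepsilon_0| > \mathcal W_n(p)\} \le \mathcal W_n(p)^{-p}\operatorname E[|\varepsilon_0|^p I_{\{|\varepsilon_0|>\mathcal W_n(p)\}}]$ and using $|w_{nj}| \le \mathcal W_n(\infty)$-type bounds. The key point is that the extra $\log$ in the moment hypothesis $\operatorname E[|\varepsilon_0|^p\log(1+|\varepsilon_0|)] < \infty$ is exactly what makes $\sum_n n^{-1}\operatorname E[|\varepsilon_0|^p I_{\{|\varepsilon_0| > \mathcal W_n(p)\}}]\,\mathcal W_n(p)^{-p}\cdot(\#\text{nonzero weights})$ summable, since $\mathcal W_n(p)\to\infty$ and a Fubini/layer-cake computation turns $\sum_n n^{-1} I_{\{|\varepsilon_0| > \mathcal W_n(p)\}}$ into something of order $\log(1+|\varepsilon_0|)$.

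\emph{The truncated part.} Here $S_n' = \sum_j w_{nj}\varepsilon_j'$ is a sum of independent bounded mean-zero-ish variables. I would apply a Rosenthal-type or von Bahr--Esseen moment inequality of order $q$ (with $p < q \le 2$): $\operatorname E|S_n' - \operatorname E S_n'|^q \le C\sum_j |w_{nj}|^q \operatorname E|\varepsilon_j'|^q \le C\,\mathcal W_n(q)^q\,\operatorname E[|\varepsilon_0|^q I_{\{|\varepsilon_0|\le\mathcal W_n(p)\}}]$. By Markov, $\Pr\{|S_n' - \operatorname E S_n'| > \delta\mathcal W_n(p)/3\} \le C\,\delta^{-q}\,\mathcal W_n(q)^q\,\mathcal W_n(p)^{-q}\,\operatorname E[|\varepsilon_0|^q I_{\{|\varepsilon_0|\le\mathcal W_n(p)\}}]$. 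Hypothesis \eqref{eq:condrate} gives $\mathcal W_n(q)^q\mathcal W_n(p)^{-q} = O(n^{q(1/q-1/p)}) = O(n^{1-q/p})$, so the series in question is dominated by $\sum_n n^{-q/p}\,\operatorname E[|\varepsilon_0|^q I_{\{|\varepsilon_0|\le\mathcal W_n(p)\}}]$. Since $q/p > 1$, this is the classical Baum--Katz summability situation: after swapping sum and expectation it reduces to $\operatorname E$ of a function of $|\varepsilon_0|$ comparable to $|\varepsilon_0|^p\log(1+|\varepsilon_0|)$ (the $\log$ again being the borderline gain), which is finite by hypothesis. The centering term $\operatorname E S_n'$ is handled by $|\operatorname E S_n'| = |\sum_j w_{nj}\operatorname E\varepsilon_j'| = |\sum_j w_{nj}\operatorname E[\varepsilon_0 I_{\{|\varepsilon_0|>\mathcal W_n(p)\}}]|$, bounded by $(\sum_j|w_{nj}|)\,\mathcal W_n(p)^{1-p}\operatorname E|\varepsilon_0|^p$ or, better, handled together with the tail part.

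\emph{Main obstacle.} The delicate point is matching the interaction between the \emph{number and size} of the nonzero weights $w_{nj}$ (which is why $\mathcal W_n(p)$, not $n^{1/p}$, is the right norming) and the truncation level in the tail term $\sum_n n^{-1}\Pr\{S_n''\ne0\}$ — one needs the count of nonzero $w_{nj}$ to be controlled, essentially, by a constant multiple of $n$ times the ratio built into \eqref{eq:condrate}, and then to verify that $\sum_n n^{-1}$ times $\Pr\{|\varepsilon_0|>\mathcal W_n(p)\}$ times that count converges precisely because of the logarithmic moment. Making this bookkeeping rigorous (rather than the Rosenthal estimate, which is routine) is where the real work lies, and it is also where condition \eqref{eq:condrate} and the divergence $\mathcal W_n(p)\to\infty$ must be used in tandem.
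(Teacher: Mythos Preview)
Your truncated-part argument is essentially the same as the paper's: von Bahr--Esseen at order $q$, then condition \eqref{eq:condrate}, giving a series $\sum_n n^{-q/p}\operatorname E[|\varepsilon_0|^q I_{\{|\varepsilon_0|\le\cdots\}}]$. (A small correction: this series needs only $\operatorname E|\varepsilon_0|^p<\infty$, not the logarithmic moment; the $\log$ is used elsewhere.)

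The genuine gap is in your tail part. Your plan is to bound $\Pr\{S_n''\ne 0\}$ by a union bound over the indices $j$ with $w_{nj}\ne 0$, and you yourself flag the ``count of nonzero $w_{nj}$'' as the main obstacle. In fact this obstacle is fatal: in the long-memory examples (e.g.\ $\psi_j=(j+1)^{-d}$) every $w_{nj}$ with $j\le n$ is nonzero, so $\sum_j I_{\{w_{nj}\ne 0\}}\Pr\{|\varepsilon_0|>\mathcal W_n(p)\}=\infty$ for each $n$. No weighting by $|w_{nj}|^p/\mathcal W_n(p)^p$ rescues this, because the event $\{S_n''\ne 0\}$ does not see the sizes of the weights, only their supports.

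The paper avoids this entirely by a different mechanism. It truncates at level $r_{nj}=n^{1/p}$ (not at $\mathcal W_n(p)$), centres both pieces, and then applies the von Bahr--Esseen inequality of order $p$ to $S_n''$ itself:
\[
\operatorname E|S_n''|^p\ \le\ 2\sum_{j\le n}|w_{nj}|^p\,\operatorname E|\varepsilon_{n0}''|^p\ =\ 2\,\mathcal W_n(p)^p\,\operatorname E|\varepsilon_{n0}''|^p,
\]
so by Markov $\Pr\{|\mathcal W_n^{-1}(p)S_n''|>\delta/2\}\le C_\delta\,\operatorname E|\varepsilon_{n0}''|^p$. The factor $\mathcal W_n(p)^p$ cancels exactly, and the weights disappear from the tail estimate; no counting of nonzero $w_{nj}$ is needed. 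The remaining series $\sum_n n^{-1}\operatorname E[|\varepsilon_0|^pI_{\{|\varepsilon_0|>n^{1/p}\}}]$ is then handled by the standard Fubini swap, and this is precisely where the logarithmic moment $\operatorname E[|\varepsilon_0|^p\log(1+|\varepsilon_0|)]<\infty$ enters. The choice of truncation level $n^{1/p}$ rather than $\mathcal W_n(p)$ is what makes that last layer-cake computation clean, since the paper never assumes any specific growth rate for $\mathcal W_n(p)$.
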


As far as we know, there is only one paper about the M-Z SLLN under long-range dependence.  The M-Z SLLN for a particular linear process $\{X_k\}$ is investigated in \cite{louhichi2000}. It is assumed that $\{\varepsilon_k\}$ are i.i.d.\ symmetric $\alpha$-stable random variables with $1<\alpha<2$ and that there exists $1\le s<\alpha$ such that $\sum|\psi_j|^s<\infty$. Let us observe that it suffices to assume that $\sum|\psi_j|^\alpha<\infty$ to define such a linear process, so a stronger assumption about $\{\psi_j:j\ge0\}$ than needed to define a linear process is made in \cite{louhichi2000}. Under these assumptions, it is proved in \cite{louhichi2000} that $n^{-1/p}S_n\to0$ almost surely as $n\to\infty$ for all $p$ such that $1/p>1-1/s+1/\alpha$.  It seems that the proof also works when $\operatorname E|\varepsilon_0|^p<\infty$ for $p<q$ with $1<q<2$ and there exists $1\le s<q$ such that $\sum|\psi_j|^s<\infty$. The M-Z SLLN for linear processes with long range dependence is established in \cite{louhichi2000} under stronger assumptions than ours. We establish the M-Z WLLN, but we make sharp and natural assumptions on $\{\varepsilon_k\}$.

The rest of the paper is organised as follows. In \autoref{sec:pre}, we present some facts about moments of random variables, establish the almost sure convergence of series \eqref{eq:linearprocess} and investigate the asymptotic behaviour of $\mathcal W_n(p)$. The proofs of our main results are in \autoref{sec:proofs}.

\section{Preliminaries}\label{sec:pre}
For $0<p<\infty$,  $L_{p,\infty}$ denotes the space of real valued random variables $\xi$ on $(\Omega,\mathcal F,\Pr)$ such that
\[
	\|\xi\|_{p,\infty}=\bigl(\sup_{x>0}(x^p\Pr\{|\xi|>x\})\bigr)^{1/p}<\infty.
\]
The functional $\|\cdot\|_{p,\infty}$ is a quasi-norm and satisfies the following inequality
\begin{equation}\label{eq:weakmomenttr}
	\|\zeta+\xi\|_{p,\infty}
	\le\max\{2,2^{1/p}\}(\|\zeta\|_{p,\infty}+\|\xi\|_{p,\infty})
\end{equation}
for $\zeta,\xi\in L_{p,\infty}$. We have, when $r>p>0$,
\begin{equation}\label{eq:weakmoment}
	\|\xi\|_{p,\infty}\le(\operatorname E|\xi|^p)^{1/p}\le\Bigl(\frac r{r-p}\Bigr)^{1/p}\|\xi\|_{r,\infty}.
\end{equation}
There exists a constant $C(p)>0$ with $1\le p<2$ such that
\begin{equation}\label{eq:weakmomentsum}
	\Bigl\|\sum_{i=1}^n\xi_i\Bigr\|_{p,\infty}^p\le C(p)\sum_{i=1}^n\|\xi_i\|_{p,\infty}^p
\end{equation}
for independent and \emph{symmetric} random variables $\xi_1,\ldots,\xi_n\in L_{p,\infty}$ (see Proposition 9.13 of \cite{ledoux1991}).
For any $r>0$, any $a>0$ and any random variable $\xi$,
\begin{equation}\label{eq:trmom}
	\operatorname E[|\xi|^rI_{\{|\xi|\le a\}}]
	=r\int_0^ax^{r-1}\Pr\{|\xi|>x\}\mathrm dx-a^r\Pr\{|\xi|>a\}.
\end{equation}

\subsection{Convergence of the series}
We establish sufficient conditions for the almost sure convergence of series \eqref{eq:linearprocess}.
\begin{prp}\label{prp:convergence}
Let $p>0$. Suppose that $\|\varepsilon_0\|_{p,\infty}<\infty$ and~$\operatorname E\varepsilon_0=0$ if $p>1$. Series~\eqref{eq:linearprocess} converges almost surely if:
\begin{enumerate}[(i)]
	\item $p\ne1$, $p\ne 2$ and $\sum_{j=0}^\infty|\psi_j|^q<\infty$, where~$q=\min\{p,2\}$;
	\item $p=1$ and $\sum_{j=0}^\infty|\psi_j|\log|\psi_j|^{-1}<\infty$;
	\item $p=2$ and $\sum_{j=0}^\infty|\psi_j|^2\log|\psi_j|^{-1}<\infty$.
\end{enumerate}
\end{prp}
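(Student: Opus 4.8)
The plan is to apply Kolmogorov's three-series theorem. Since for each fixed $k$ the family $\{\varepsilon_{k-j}:j\ge0\}$ is i.i.d.\ with the law of $\varepsilon_0$, and a countable union of null sets being null, it suffices to show that $\sum_{j\ge0}Y_j$ converges almost surely, where $Y_j=\psi_j\varepsilon_{-j}$ are independent. In every case the hypothesis on $\{\psi_j\}$ forces $\psi_j\to0$ and $\sum_{j\ge0}|\psi_j|^p<\infty$, and I will use repeatedly the bound $\Pr\{|\varepsilon_0|>x\}\le\min\{1,\|\varepsilon_0\|_{p,\infty}^px^{-p}\}$, immediate from the definition of $\|\cdot\|_{p,\infty}$. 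Fix $A>0$ and write $Y_j^A=Y_jI_{\{|Y_j|\le A\}}$.

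The tail series is easy: $\sum_j\Pr\{|Y_j|>A\}=\sum_j\Pr\{|\varepsilon_0|>A/|\psi_j|\}\le A^{-p}\|\varepsilon_0\|_{p,\infty}^p\sum_j|\psi_j|^p<\infty$. For the variance series, $\operatorname{Var}(Y_j^A)\le\psi_j^2\operatorname E[\varepsilon_0^2I_{\{|\varepsilon_0|\le A/|\psi_j|\}}]$, and \eqref{eq:trmom} with $r=2$, after splitting the integral $\int_0^ax\Pr\{|\varepsilon_0|>x\}\,\mathrm dx$ at $x=\|\varepsilon_0\|_{p,\infty}$, gives $\operatorname E[\varepsilon_0^2I_{\{|\varepsilon_0|\le a\}}]\le C_p\|\varepsilon_0\|_{p,\infty}^pa^{2-p}$ when $0<p<2$ and $\le C\|\varepsilon_0\|_{2,\infty}^2(1+\log^+a)$ when $p=2$; for $p>2$, \eqref{eq:weakmoment} already yields $\operatorname E\varepsilon_0^2<\infty$. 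Taking $a=A/|\psi_j|$ and summing, the variance series converges from $\sum|\psi_j|^p<\infty$ (when $p<2$), from $\sum\psi_j^2<\infty$ (when $p>2$), and from $\sum\psi_j^2\log|\psi_j|^{-1}<\infty$ (when $p=2$, where the extra $\log|\psi_j|^{-1}$ is exactly what the hypothesis supplies).

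For the series of truncated means, when $p>1$ I use $\operatorname E\varepsilon_0=0$ to rewrite $\operatorname E[Y_j^A]=-\psi_j\operatorname E[\varepsilon_0I_{\{|\varepsilon_0|>A/|\psi_j|\}}]$, and the tail bound gives $\operatorname E[|\varepsilon_0|I_{\{|\varepsilon_0|>a\}}]=a\Pr\{|\varepsilon_0|>a\}+\int_a^\infty\Pr\{|\varepsilon_0|>x\}\,\mathrm dx\le C_p\|\varepsilon_0\|_{p,\infty}^pa^{1-p}$, so $|\operatorname E[Y_j^A]|\le C_pA^{1-p}\|\varepsilon_0\|_{p,\infty}^p|\psi_j|^p$ and $\sum_j\operatorname E[Y_j^A]$ converges absolutely (note there is no logarithmic loss at $p=2$ here — at $p=2$ that loss sits in the variance series). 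When $0<p<1$ there is no mean hypothesis, but $|\operatorname E[Y_j^A]|\le|\psi_j|\operatorname E[|\varepsilon_0|I_{\{|\varepsilon_0|\le a\}}]$, and \eqref{eq:trmom} with $r=1$, split at $x=\|\varepsilon_0\|_{p,\infty}$, again gives $\le C_p\|\varepsilon_0\|_{p,\infty}^p|\psi_j|^p$. When $p=1$ the same split yields only $\operatorname E[|\varepsilon_0|I_{\{|\varepsilon_0|\le a\}}]\le C\|\varepsilon_0\|_{1,\infty}(1+\log^+a)$, hence $|\operatorname E[Y_j^A]|\le C\|\varepsilon_0\|_{1,\infty}|\psi_j|(1+\log A+\log|\psi_j|^{-1})$, and this is where the hypothesis $\sum|\psi_j|\log|\psi_j|^{-1}<\infty$ is needed. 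With all three series verified, the three-series theorem gives almost sure convergence; for $p>2$ one may instead invoke Kolmogorov's one-series theorem directly, since $\operatorname E\varepsilon_0=0$, $\operatorname E\varepsilon_0^2<\infty$ and $\sum\psi_j^2<\infty$.

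The estimates are routine once one has the tail bound and \eqref{eq:trmom}; the only delicate point is recognizing that a logarithmic factor is unavoidable in the truncated first moment at $p=1$ and in the truncated second moment at $p=2$ — which is precisely why the hypotheses at these two critical exponents carry the weight $\log|\psi_j|^{-1}$, whereas for $p\notin\{1,2\}$ the plain summability of $\{|\psi_j|^{\min\{p,2\}}\}$ is enough.
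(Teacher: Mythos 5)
Your proposal is correct and follows essentially the same route as the paper: Kolmogorov's three-series theorem with the truncation $|\varepsilon_{-j}|\le A/|\psi_j|$ (the paper takes $A=1$), the tail series controlled by $\sum|\psi_j|^p\|\varepsilon_0\|_{p,\infty}^p$, the truncated means handled via $\operatorname E\varepsilon_0=0$ when $p>1$ and via \eqref{eq:trmom} when $p\le1$, and the truncated variances via \eqref{eq:trmom}, with the logarithmic factors appearing exactly at $p=1$ (means) and $p=2$ (variances). The only cosmetic difference is that the paper writes out the three series directly rather than naming the theorem, and for $p>2$ it keeps the three-series framework where you optionally switch to the one-series theorem; both are fine.
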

\begin{proof}
Assume without loss of generality that~$\psi_j\ne0$ for each~$j\ge0$. We establish the convergence of the following series:
\begin{align}
	&\sum_{j=0}^\infty\Pr\{|\varepsilon_0|>|\psi_j|^{-1}\};\label{eq:first}\\
	&\sum_{j=0}^\infty\psi_j\operatorname E[\varepsilon_0I_{\{|\varepsilon_0|\le|\psi_j|^{-1}\}}];\label{eq:second}\\
	&\sum_{j=0}^\infty\psi_j^2\operatorname{Var}[\varepsilon_0I_{\{|\varepsilon_0|\le|\psi_j|^{-1}\}}].	\label{eq:third}
\end{align}

First, we establish convergence of series~\eqref{eq:first}. We have that
\begin{equation}\label{eq:prob}
	\sum_{j=0}^\infty|\psi_j|^p|\psi_j|^{-p}\Pr\{|\varepsilon_0|>|\psi_j|^{-1}\}
	\le\|\varepsilon_0\|_{p,\infty}^p\sum_{j=0}^\infty|\psi_j|^p.
\end{equation}

Secondly, we investigate the convergence of series~\eqref{eq:second}. If $0<p<1$, then
\[
	\sum_{j=0}^\infty|\psi_j|\operatorname E[|\varepsilon_0|I_{\{|\varepsilon_0|\le|\psi_j|^{-1}\}}]
	\le\sum_{j=0}^\infty|\psi_j|\|\varepsilon_0\|_{p,\infty}^p\frac{|\psi_j|^{p-1}}{1-p}
	=\frac{\|\varepsilon_0\|_{p,\infty}^p}{1-p}\sum_{j=0}^\infty|\psi_j|^p
\]
using \eqref{eq:trmom}. If $p=1$, then 
\[
	\int_0^{|\psi_j|^{-1}}\Pr\{|\varepsilon_0|>s\}\mathrm ds\le1+\|\varepsilon_0\|_{1,\infty}\log|\psi_j|^{-1}
\]
for $j\ge J$, where $J\ge0$ is such that $|\psi_j|^{-1}\ge1$ when $j\ge J$. Using \eqref{eq:trmom},
\[
	\sum_{j=J}^\infty|\psi_j|\operatorname E[|\varepsilon_0|I_{\{|\varepsilon_0|\le|\psi_j|^{-1}\}}]
	\le\sum_{j=J}^\infty|\psi_j|+\|\varepsilon_0\|_{1,\infty}\sum_{j=J}^\infty|\psi_j|\log|\psi_j|^{-1}.
\]
If $p>1$, we have that 
\begin{equation}\label{eq:expectation}
		\operatorname E[\varepsilon_0I_{\{|\varepsilon_0|\le|\psi_j|^{-1}\}}]
		=\operatorname E[\varepsilon_0I_{\{|\varepsilon_0|\le|\psi_j|^{-1}\}}]-\operatorname E\varepsilon_0
		=-\operatorname E[\varepsilon_0I_{\{|\varepsilon_0|>|\psi_j|^{-1}\}}]
\end{equation}
and
\begin{align*}
	\operatorname E[|\varepsilon_0|I_{\{|\varepsilon_0|>|\psi_j|^{-1}\}}]
	&=\int_0^{|\psi_j|^{-1}}\Pr\{|\varepsilon_0|I_{\{|\varepsilon_0|>|\psi_j|^{-1}\}}>x\}\mathrm dx\\
	&\qquad+\int_{|\psi_j|^{-1}}^\infty\Pr\{|\varepsilon_0|I_{\{|\varepsilon_0|>|\psi_j|^{-1}\}}>x\}\mathrm dx\\
	&=|\psi_j|^{-1}\Pr\{|\varepsilon_0|>|\psi_j|^{-1}\}+\int_{|\psi_j|^{-1}}^\infty\Pr\{|\varepsilon_0|>x\}\mathrm dx
\end{align*}
since $\Pr\{|\varepsilon_0|I_{\{|\varepsilon_0|>|\psi_j|^{-1}\}}>x\}=\Pr\{|\varepsilon_0|>|\psi_j|^{-1}\}$ for $0\le x\le|\psi_j|^{-1}$. Hence,
\begin{align*}
	\sum_{j=0}^\infty|\psi_j|\operatorname E[|\varepsilon_0|I_{\{|\varepsilon_0|>|\psi_j|^{-1}\}}]
	&\le\sum_{j=0}^\infty|\psi_j|\|\varepsilon_0\|_{p,\infty}^p\Bigl[1+\frac1{p-1}\Bigr]|\psi_j|^{p-1}\\
	&=\|\varepsilon_0\|_{p,\infty}^p\Bigl[1+\frac1{p-1}\Bigr]\sum_{j=0}^\infty|\psi_j|^p.
\end{align*}

Finally, we complete the proof by establishing the convergence of series~\eqref{eq:third}. If $0<p<2$, then
\[
	\sum_{j=0}^\infty|\psi_j|^2\operatorname E[|\varepsilon_0|^2I_{\{|\varepsilon_0|\le|\psi_j|^{-1}\}}]
	\le\sum_{j=0}^\infty|\psi_j|^22\|\varepsilon_0\|_{p,\infty}^p\frac{|\psi_j|^{p-2}}{2-p}
	=\frac{2\|\varepsilon_0\|_{p,\infty}^p}{2-p}\sum_{j=0}^\infty|\psi_j|^p	
\]
using \eqref{eq:trmom}. If $p=2$, then
\[
	\int_0^{|\psi_j|^{-1}}2s\Pr\{|\varepsilon_0|>s\}\mathrm ds
	\le1+2\|\varepsilon_0\|_{2,\infty}^2\log|\psi_j|^{-1}
\]
for $j\ge J$, where $J\ge0$ is such that $|\psi_j|^{-1}\ge1$ when $j\ge J$. Using \eqref{eq:trmom},
\[
	\sum_{j=J}^\infty|\psi_j|^2\operatorname E[|\varepsilon_0|^2I_{\{|\varepsilon_0|\le|\psi_j|^{-1}\}}]
	\le\sum_{j=J}^\infty|\psi_j|^2+2\|\varepsilon_0\|_{2,\infty}^2\sum_{j=J}^\infty|\psi_j|^2\log|\psi_j|^{-1}.
\]
If $p>2$, then $\operatorname{Var}[\varepsilon_0I_{\{|\varepsilon_0|\le|\psi_j|^{-1}\}}]\to\operatorname{Var}\varepsilon_0$ as~$j\to\infty$ and the~series
\[
	\sum_{j=0}^\infty\psi_j^2\operatorname{Var}[\varepsilon_0I_{\{|\varepsilon_0|\le|\psi_j|^{-1}\}}]
\]
converges if~$\sum_{j=0}^\infty\psi_j^2<\infty$.
\end{proof}

\begin{remark*}
Suppose that $\psi_j>0$ for each $j\ge0$ and that $\varepsilon_0$ has the density function
\[
	f(x)=
	\begin{cases}
		\frac2\pi\frac1{1+x^2}&\text{if }x\ge0,\\
		0&\text{if }x<0.
	\end{cases}
\]
Then $\varepsilon_0\in L_{1,\infty}$ and 
\[
	\operatorname E[\varepsilon_0I_{\{\varepsilon_0\le\psi_j^{-1}\}}]
	=\frac2\pi\int_0^{\psi_j^{-1}}\frac x{1+x^2}\mathrm dx
	=\frac1\pi\log(\psi_j^{-2}+1)
\]
so that the series
\[
		\sum_{j=0}^\infty\operatorname E[\psi_j\varepsilon_{k-j}I_{\{|\psi_j\varepsilon_{k-j}|\le1\}}]
\]
converges if and only if $\sum_{j=0}^\infty\psi_j\log\psi_j^{-1}<\infty$ since $\log(\psi_j^{-2}+1)\sim 2\log\psi_j^{-1}$. Hence, the condition in \autoref{prp:convergence} when $p=1$  is sharp. Similarly, we can construct an example to show that the condition in \autoref{prp:convergence} when $p=2$ is also sharp.
\end{remark*}

\subsection{Asymptotic behaviour of $\{\mathcal W_n(p)\}$}\label{subsec:behaviour}
We investigate the asymptotic behaviour of $\{\mathcal W_n(p)\}$ in this subsection.
\begin{prp}\label{prp:W_n(p)bound}
Suppose that $p\ge1$ and $\sum|\psi_j|<\infty$. Then
\[
	\mathcal W_n(p)=O(n^{1/p})
\]
as $n\to\infty$.
\end{prp}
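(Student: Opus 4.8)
The plan is to bound $\mathcal W_n(p)$ directly from its definition by exploiting the absolute summability of $\{\psi_j\}$. Recall that $w_{nj}=\sum_{k=1}^n\psi_{k-j}$, so $|w_{nj}|\le\sum_{k=1}^n|\psi_{k-j}|\le\sum_{\ell\ge0}|\psi_\ell|=:M<\infty$ for every $n$ and $j$. The first key step is therefore the crude uniform bound $|w_{nj}|\le M$. The second step is to interchange the order of summation in
\[
	\sum_{j=-\infty}^n|w_{nj}|^p
	\le M^{p-1}\sum_{j=-\infty}^n|w_{nj}|
	\le M^{p-1}\sum_{j=-\infty}^n\sum_{k=1}^n|\psi_{k-j}|
	=M^{p-1}\sum_{k=1}^n\sum_{j=-\infty}^n|\psi_{k-j}|,
\]
where in the first inequality I used $|w_{nj}|^p=|w_{nj}|^{p-1}|w_{nj}|\le M^{p-1}|w_{nj}|$, valid since $p\ge1$. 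The inner sum $\sum_{j=-\infty}^n|\psi_{k-j}|$ is a sub-sum of $\sum_{\ell\ge0}|\psi_\ell|=M$ (the index $\ell=k-j$ ranges over a subset of the nonnegative integers as $j$ runs from $-\infty$ to $n$, since $\psi_\ell=0$ for $\ell<0$), hence is bounded by $M$. This gives $\sum_{j=-\infty}^n|w_{nj}|^p\le M^{p-1}\cdot nM=M^p n$, and taking $p$-th roots yields $\mathcal W_n(p)\le M\,n^{1/p}=O(n^{1/p})$.

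There is essentially no obstacle here: the only things to be careful about are (i) checking that $w_{nj}$ is well defined and the sum $\sum_j|w_{nj}|^p$ finite — both guaranteed since $\sum|\psi_j|<\infty$ implies $\sum|\psi_j|^p<\infty$ for $p\ge1$, and the excerpt already notes $\mathcal W_n(p)$ is then finite — and (ii) justifying the Tonelli-type interchange of the two (one finite, one infinite) sums of nonnegative terms, which is immediate. The step that does the real work is the uniform bound $|w_{nj}|\le M$ combined with the factorization $|w_{nj}|^p\le M^{p-1}|w_{nj}|$; this is what converts the $p$-th power sum into a single power sum that telescopes against $\sum|\psi_j|<\infty$. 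I would present the argument in the two-or-three line display above with a sentence of justification for the summation swap, and that completes the proof.
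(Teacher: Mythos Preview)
Your proof is correct and slightly cleaner than the paper's. Both arguments reduce the $p$-th-power sum to a first-power sum and then interchange the order of summation to exploit $\sum_j|\psi_j|<\infty$, but they do the reduction differently. The paper splits the index range as
\[
\mathcal W_n^p(p)=\sum_{j=0}^\infty\Bigl|\sum_{k=1}^n\psi_{k+j}\Bigr|^p+\sum_{j=1}^n\Bigl|\sum_{k=0}^{n-j}\psi_k\Bigr|^p,
\]
bounds the second sum trivially by $n(\sum_k|\psi_k|)^p$, and for the first sum chooses $N$ with $\sum_{j\ge N}|\psi_j|\le1$ so that the tail terms satisfy $|\cdot|^p\le|\cdot|$; the interchange then handles the tail, while the finitely many remaining terms are bounded by a constant. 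Your device $|w_{nj}|^p\le M^{p-1}|w_{nj}|$ with $M=\sum_\ell|\psi_\ell|$ replaces this case split by a single uniform inequality and yields the explicit bound $\mathcal W_n(p)\le M\,n^{1/p}$ in one line. One small remark: for $1\le k\le n$ the substitution $\ell=k-j$ sends $j\in(-\infty,n]$ to $\ell\in[k-n,\infty)\supset[0,\infty)$, so the inner sum actually \emph{equals} $M$, not merely a sub-sum; this does not affect the bound.
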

\begin{proof}
We have that  $\mathcal W_n^p(p)=\sum_{j=0}^\infty|\sum_{k=1}^n\psi_{k+j}|^p+\sum_{j=1}^n|\sum_{k=0}^{n-j}\psi_k|^p$. There exists $N\ge0$ such that $\sum_{j=N}^\infty|\psi_j|\le1$ since $\sum|\psi_j|<\infty$. It follows that $|\sum_{k=1}^n\psi_{k+j}|\le 1$ for $j\ge N-1$ and $n\ge1$. Hence,
\[
	\sum_{j=N-1}^\infty\biggl|\sum_{k=1}^n\psi_{k+j}\biggr|^p
	\le\sum_{j=N-1}^\infty\biggl|\sum_{k=1}^n\psi_{k+j}\biggr|
	\le\sum_{k=1}^n\sum_{j=N}^\infty|\psi_{k+j}|
	\le n
\]
and
\[
	\sum_{j=0}^\infty\biggl|\sum_{k=1}^n\psi_{k+j}\biggr|^p
	\le\sum_{j=0}^{N-2}\biggl|\sum_{k=1}^n\psi_{k+j}\biggr|^p+n
	\le(N-1)\biggl(\sum_{k=1}^\infty|\psi_k|\biggr)^p+n.
\]
Also, we have that
\[
	\sum_{j=1}^n\biggl|\sum_{k=0}^{n-j}\psi_k\biggr|^p
	\le\sum_{j=1}^n\bigg(\sum_{k=0}^{n-j}|\psi_k|\biggr)^p
	\le n\bigg(\sum_{k=0}^\infty|\psi_k|\biggr)^p.
\]
The proof is complete.
\end{proof}
\begin{prp}\label{prp:b_n(p)pw}
Let $p>1$ and $1/p<d<1$. Suppose that $\psi_j=(j+1)^{-d}$ for $j\ge0$. Then $\mathcal W_n(p)\sim c\cdot n^{1/p+1-d}$ as $n\to\infty$, where $c$ is a positive constant.
\end{prp}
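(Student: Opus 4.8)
The plan is to rewrite $\mathcal W_n(p)$ in terms of the cumulative coefficient sums and then recognise the resulting quantity as a Riemann-type approximation to an integral. Set $\Psi(m)=\sum_{i=0}^m(i+1)^{-d}$ for $m\ge0$ and $\Psi(m)=0$ for $m<0$. Since $w_{nj}=\sum_{k=1}^n\psi_{k-j}=\sum_{i=1-j}^{n-j}\psi_i=\Psi(n-j)-\Psi(-j)$, and since $w_{nj}=0$ for $j>n$ (so the sum defining $\mathcal W_n(p)$ may be read as running over all $j\in\mathbb Z$), the substitution $l=n-j$ gives
\[
	\mathcal W_n^p(p)=\sum_{l=0}^\infty\bigl(\Psi(l)-\Psi(l-n)\bigr)^p,
\]
where each summand is nonnegative because $\Psi$ is nondecreasing. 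Using $\sum_{l\ge0}a_l=\int_0^\infty a_{\lfloor x\rfloor}\,\mathrm dx$ for a nonnegative sequence and the change of variables $x=tn$, this becomes
\[
	\frac{\mathcal W_n^p(p)}{n^{1+p(1-d)}}=\int_0^\infty R_n(t)^p\,\mathrm dt,\qquad
	R_n(t)=\frac{\Psi(\lfloor tn\rfloor)-\Psi(\lfloor tn\rfloor-n)}{n^{1-d}}.
\]
It then suffices to prove that $\int_0^\infty R_n(t)^p\,\mathrm dt\to c^p$ for some finite positive constant $c$, since $1+p(1-d)=p(1/p+1-d)$.

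For the pointwise limit I would use the elementary fact that $\Psi(m)=\sum_{i=1}^{m+1}i^{-d}\sim m^{1-d}/(1-d)$ as $m\to\infty$, which holds because $0<d<1$. Together with $\lfloor tn\rfloor\sim tn$, $\lfloor tn\rfloor-n\sim(t-1)n$, and $\Psi(\lfloor tn\rfloor-n)=0$ when $t<1$, this yields for every fixed $t\ge0$
\[
	R_n(t)\longrightarrow\rho(t):=\frac{t^{1-d}-\bigl((t-1)_+\bigr)^{1-d}}{1-d}.
\]

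The crucial step is to produce an integrable majorant for the family $R_n(t)^p$ so that dominated convergence applies. From $\Psi(m)\le C(m+1)^{1-d}$ for all $m\ge0$ (with $C=1+(1-d)^{-1}$, by comparison with $\int_1^{m+1}x^{-d}\,\mathrm dx$) one gets $R_n(t)\le C(t+1)^{1-d}$ for all $t$ and $n$, hence $R_n(t)\le C3^{1-d}$ on $[0,2]$. For $t>1$, so that $\lfloor tn\rfloor\ge n$, one has $\Psi(l)-\Psi(l-n)=\sum_{i=l-n+2}^{l+1}i^{-d}\le n\,(l-n+2)^{-d}$ with $l=\lfloor tn\rfloor$, and since $l-n+2\ge(t-1)n$ this gives $R_n(t)\le n^d\bigl((t-1)n\bigr)^{-d}=(t-1)^{-d}$. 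Therefore $R_n(t)^p\le g(t):=\bigl(C3^{1-d}\bigr)^pI_{\{t\le2\}}+(t-1)^{-dp}I_{\{t>2\}}$, and $\int_0^\infty g(t)\,\mathrm dt<\infty$ precisely because $dp>1$, i.e.\ because $d>1/p$. Dominated convergence then gives $\int_0^\infty R_n(t)^p\,\mathrm dt\to\int_0^\infty\rho(t)^p\,\mathrm dt=:c^p$.

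Finally, $c^p=\int_0^\infty\rho(t)^p\,\mathrm dt$ is positive since $\rho(t)>0$ for every $t>0$, and finite since $\rho$ is continuous with $\rho(t)^p\to0$ as $t\to0$ and $\rho(t)\sim t^{-d}$ as $t\to\infty$ (using $dp>1$ once more). Hence $\mathcal W_n^p(p)\sim c^p\,n^{1+p(1-d)}$, that is, $\mathcal W_n(p)\sim c\,n^{1/p+1-d}$ with $c=\bigl(\int_0^\infty\rho(t)^p\,\mathrm dt\bigr)^{1/p}$. The main obstacle is the domination step: one must split the range of $t$ at $t=1$ and combine a "regular variation" bound on $\Psi$ near the origin with a "few terms, each small" bound on its increments $\Psi(l)-\Psi(l-n)$ in the tail, and then verify integrability of the majorant — which is exactly where the hypothesis $1/p<d$ is used (and is also what makes $\mathcal W_n(p)$ finite in the first place).
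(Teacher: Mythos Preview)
Your argument is correct. Both proofs reduce $\mathcal W_n^p(p)/n^{1+p(1-d)}$ to a Riemann-type approximation of an integral, so the underlying idea is the same as the paper's. The execution, however, differs: the paper splits the sum at $j=0$, treats $\sum_{j\ge1}|\sum_{k=1}^n(k+j)^{-d}|^p$ by a block-wise Riemann sum converging to $\int_0^\infty|\int_0^1(x+y)^{-d}\,\mathrm dx|^p\,\mathrm dy$ (with the interchange of limit and infinite sum left implicit), and for the remaining piece $\sum_{j=1}^n|\sum_{k=1}^{n-j+1}k^{-d}|^p$ only records the upper bound $(n+1)^{1+p(1-d)}/[(1+p(1-d))(1-d)^p]$. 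Your single substitution $l=n-j$ yields $\int_0^\infty R_n(t)^p\,\mathrm dt$ and an explicit dominating function $g(t)=(C3^{1-d})^p I_{\{t\le2\}}+(t-1)^{-dp}I_{\{t>2\}}$, whose integrability is exactly the point where the hypothesis $d>1/p$ enters. This gives in one stroke the full asymptotic with the constant $c=\bigl(\int_0^\infty\rho(t)^p\,\mathrm dt\bigr)^{1/p}$, rather than a limit for one piece and an $O$-bound for the other; in particular your domination step supplies the justification that the paper's interchange of $\lim$ and $\sum$ tacitly requires.
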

\begin{proof}
We have that
\[
	\mathcal W_n^p(p)=\sum_{j=1}^\infty\biggl|\sum_{k=1}^n(k+j)^{-d}\biggr|^p+\sum_{j=1}^n\biggl|\sum_{k=1}^{n-j+1}k^{-d}\biggr|^p.
\]
We obtain that the limit
\begin{align*}
&\lim_{n\to\infty}\frac1{n^{1+p(1-d)}}\sum_{j=1}^\infty\biggl|\sum_{k=1}^n(k+j)^{-d}\biggr|^p\\
	&\qquad=\lim_{n\to\infty}\sum_{j=1}^\infty\frac1n\sum_{l=(j-1)n+1}^{jn}\biggl|\frac1n\sum_{k=1}^n\biggl(\frac kn+\frac ln\biggr)^{-d}\biggr|^p\\
	&\qquad=\sum_{j=1}^\infty\lim_{n\to\infty}\frac1n\sum_{l=(j-1)n+1}^{jn}\biggl|\frac1n\sum_{k=1}^n\biggl(\frac kn+\frac ln\biggr)^{-d}\biggr|^p\\
	&\qquad=\sum_{j=1}^\infty\int_{j-1}^j\biggl|\int_0^1(x+y)^{-d}\mathrm dx\biggr|^p\mathrm dy\\
	&\qquad=\int_0^\infty\biggl|\int_0^1(x+y)^{-d}\mathrm dx\biggr|^p\mathrm dy
\end{align*}
is finite and positive. By approximating sums with definite integrals, we obtain $\sum_{k=1}^{n-j+1}k^{-d}\le(n-j+1)^{1-d}/(1-d)$
and
\[
\sum_{j=1}^n\biggl|\frac{(n-j+1)^{1-d}}{1-d}\biggr|^p
	\le\int_0^n\frac{(n-x+1)^{p(1-d)}}{[1-d]^p}\mathrm dx
	=\frac{(n+1)^{1+p(1-d)}-1}{[1+p(1-d)][1-d]^p}.
\]
The proof is complete.
\end{proof}
\begin{prp}\label{prp:sup}
Let $1\le p<2$. Suppose that $\sum|\psi_j|^p<\infty$. If $\mathcal W_n(p)\to\infty$ as $n\to\infty$, then
\[
	\frac{\sup_{j\in\mathbb Z}|w_{nj}|}{\mathcal W_n(p)}\to0\quad\text{as}\quad n\to\infty.
\]
\end{prp}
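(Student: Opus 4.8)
The approach I would take is a contradiction argument built on the observation that the sequence $j\mapsto w_{nj}$ has $p$-variation bounded \emph{uniformly in} $n$: this forces $|w_{nj}|$ to vary slowly, so a large value of $\sup_j|w_{nj}|$ would have to persist over a long block of indices $j$, which in turn would make $\mathcal W_n(p)$ correspondingly large. First I would record two reductions. Since $w_{nj}=0$ for $j>n$ we have $\sum_{j\in\mathbb Z}|w_{nj}|^p=\mathcal W_n(p)^p$; moreover $|w_{nj}|\le n^{1-1/p}(\sum_k|\psi_k|^p)^{1/p}$ and $w_{nj}\to0$ as $j\to-\infty$ (Hölder together with $\sum|\psi_k|^p<\infty$), so $M_n:=\sup_j|w_{nj}|$ is finite and attained. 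Also, the case $p=1$ is immediate and I would dispose of it first: there $\sum_k|\psi_k|<\infty$, hence $|w_{nj}|\le\sum_k|\psi_k|$ for all $n$ and $j$, and the claim follows from $\mathcal W_n(1)\to\infty$. (In passing: the naive alternative of splitting $\{\psi_j\}$ into a short head and a small-$\ell^p$ tail and invoking \autoref{prp:W_n(p)bound} on the head does not seem to close, because $\mathcal W_n(p)$ may grow strictly more slowly than $n^{1-1/p}$, the best available bound on the tail's window sums.) So from now on $1<p<2$.

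The key estimate is the uniform increment bound. A direct computation gives $w_{nj}-w_{n,j+1}=\psi_{n-j}-\psi_{-j}$, so by the elementary inequality $|a-b|^p\le2^{p-1}(|a|^p+|b|^p)$,
\[
	\sum_{j\in\mathbb Z}|w_{nj}-w_{n,j+1}|^p\ \le\ 2^p\sum_k|\psi_k|^p\ =:\ C_1\ <\ \infty
\]
for every $n$, with $C_1>0$ since $\mathcal W_n(p)\to\infty$ rules out $\psi\equiv0$. Applying Hölder's inequality to a run of $|j-j'|$ consecutive increments then gives
\[
	|w_{nj}-w_{nj'}|\ \le\ |j-j'|^{1-1/p}\,C_1^{1/p}\qquad\text{for all }j,j'\in\mathbb Z\text{ and all }n.
\]

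Now suppose the conclusion fails. Then for some $\delta>0$ there is a subsequence along which $M_n\ge\delta\,\mathcal W_n(p)$; fix $j_n$ with $|w_{n,j_n}|=M_n$, and note $M_n\to\infty$. Set $h_n=\lfloor(M_n/(2C_1^{1/p}))^{p/(p-1)}\rfloor$. Using the identity $(1-1/p)\cdot p/(p-1)=1$ one gets $h_n^{1-1/p}C_1^{1/p}\le M_n/2$, so by the increment bound $|w_{nj}|\ge M_n/2$ for every $j$ with $|j-j_n|\le h_n$; hence, since $2h_n+1\ge h_n+1>(M_n/(2C_1^{1/p}))^{p/(p-1)}$,
\[
	\mathcal W_n(p)^p\ =\ \sum_{j\in\mathbb Z}|w_{nj}|^p\ \ge\ (2h_n+1)\Bigl(\frac{M_n}{2}\Bigr)^p\ >\ \Bigl(\frac{M_n}{2C_1^{1/p}}\Bigr)^{p/(p-1)}\Bigl(\frac{M_n}{2}\Bigr)^p.
\]
Since also $\mathcal W_n(p)^p\le M_n^p/\delta^p$, and $p/(p-1)+p=p^2/(p-1)$, this forces $M_n^{p/(p-1)}$ to be bounded above by a constant depending only on $p,\delta,C_1$, contradicting $M_n\to\infty$.

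The substantive content is entirely in the uniform $p$-variation bound of the key estimate — the telescoping identity $w_{nj}-w_{n,j+1}=\psi_{n-j}-\psi_{-j}$ is what makes it work, and it is precisely this uniformity that keeps the block argument effective as $n\to\infty$. Everything else is bookkeeping, the only subtlety being the exponent arithmetic around $p/(p-1)$, which degenerates at $p=1$ and is why that borderline case is handled separately at the outset.
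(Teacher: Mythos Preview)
Your proof is correct. Both your argument and the paper's hinge on the same key observation, namely the uniform $p$-variation bound $\sum_{j}|w_{nj}-w_{n,j+1}|^p\le 2^p\sum_k|\psi_k|^p$ coming from the telescoping identity $w_{nj}-w_{n,j+1}=\psi_{n-j}-\psi_{-j}$. From there the routes diverge: the paper applies the mean-value inequality $(t+h)^p\le t^p+p(t+h)^{p-1}h$ to the sequence $|w_{nj}|^p$, telescopes, and uses H\"older to obtain the direct pointwise bound
\[
\sup_{m}|w_{nm}|^p\ \le\ p\,\mathcal W_n(p)^{p-1}\Bigl(2^p\sum_j|\psi_j|^p\Bigr)^{1/p}+2^p p\sum_j|\psi_j|^p,
\]
which immediately yields the conclusion and in fact the quantitative rate $\sup_j|w_{nj}|=O(\mathcal W_n(p)^{1-1/p})$. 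You instead turn the increment bound into a H\"older-continuity estimate $|w_{nj}-w_{nj'}|\le C_1^{1/p}|j-j'|^{1-1/p}$ and run a block contradiction: a large supremum must persist over a block of length $\sim M_n^{p/(p-1)}$, forcing $\mathcal W_n(p)^p\gtrsim M_n^{p^2/(p-1)}$, which conflicts with $\mathcal W_n(p)\le M_n/\delta$. Unwinding your inequalities recovers exactly the same scaling $M_n=O(\mathcal W_n(p)^{1-1/p})$, so the two arguments are quantitatively equivalent. The paper's version is a bit more economical and handles $p=1$ within the same framework (H\"older with the dual exponent $q=\infty$ just gives $M_n\le 2\sum|\psi_j|$), whereas your block argument degenerates at $p=1$ and you rightly treat that case separately; on the other hand, your approach makes the geometric picture---slow variation forces persistence---more transparent.
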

\begin{proof}
By the mean value theorem, there exists $0<\theta<1$ such that
\[
	(t+h)^p-t^p=p(t+\theta h)^{p-1}h
\]
for $t\ge0$ and $h\ge0$. Hence,
\begin{equation}\label{eq:mvt}
	(t+h)^p\le t^p+p(t+h)^{p-1}h
\end{equation}
for $t\ge0$ and $h\ge0$.

For $m\in\mathbb Z$, $k\in\mathbb Z$ and $n\ge1$, we have that
\begin{align*}
	&\sum_{j=m-k}^m|w_{nj}|^p=\\
	&\le\sum_{j=m-k}^m(|w_{n,j-1}|+|w_{nj}-w_{n,j-1}|)^p\\
	&\le\sum_{j=m-k}^m|w_{n,j-1}|^p+p\sum_{j=m-k}^m(|w_{n,j-1}|+|w_{nj}-w_{n,j-1}|)^{p-1}|w_{nj}-w_{n,j-1}|
\end{align*}
using the triangle inequality and inequality \eqref{eq:mvt}.
 
For $a\ge0$, $b\ge0$ and $p>0$,
\begin{equation}\label{eq:pandr}
	(a+b)^p\le\max\{1,2^{p-1}\}(a^p+b^p).
\end{equation}

By inequality \eqref{eq:pandr},
\begin{multline*}
\sum_{j=m-k}^m(|w_{n,j-1}|+|w_{nj}-w_{n,j-1}|)^{p-1}|w_{nj}-w_{n,j-1}|\\
	\le\sum_{j=m-k}^m|w_{n,j-1}|^{p-1}|w_{nj}-w_{n,j-1}|+\sum_{j=m-k}^m|w_{nj}-w_{n,j-1}|^p
\end{multline*}
since $0\le p-1<1$. We have that
\[
	\sum_{j=m-k}^m|w_{nj}|^p-\sum_{j=m-k}^m|w_{n,j-1}|^p
	=|w_{nm}|^p-|w_{n,m-k-1}|^p
\]
so it follows that
\begin{multline*}
	|w_{nm}|^p\le|w_{n,m-k-1}|^p+p\sum_{j=m-k}^m|w_{n,j-1}|^{p-1}|w_{nj}-w_{n,j-1}|\\
	+p\sum_{j=m-k}^m|w_{nj}-w_{n,j-1}|^p.
\end{multline*}
Letting $k\to\infty$, we obtain
\[
	\sup_{m\in\mathbb Z}|w_{nm}|^p
	\le p\sum_{j=-\infty}^\infty|w_{nj}|^{p-1}|w_{nj}-w_{n,j-1}|+p\sum_{j=-\infty}^\infty|w_{nj}-w_{n,j-1}|^p.
\]
By H\"older's inequality with $p$ and $q=p/(p-1)$,
\begin{multline*}
	\sum_{j=-\infty}^\infty|w_{nj}|^{p-1}|w_{nj}-w_{n,j-1}|\\
	\le\biggl(\sum_{j=-\infty}^\infty|w_{nj}|^p\biggr)^{1/q}\biggl(\sum_{j=-\infty}^\infty|w_{nj}-w_{n,j-1}|^p\biggr)^{1/p}.
\end{multline*}
It follows that
\begin{equation}\label{eq:sup}
	\sup_{m\in\mathbb Z}|w_{nm}|^p
	\le p\biggl(\sum_{j=-\infty}^\infty|w_{nj}|^p\biggr)^{1/q}\biggl(2^p\sum_{j=0}^\infty|\psi_j|^p\biggr)^{1/p}+2^pp\sum_{j=0}^\infty|\psi_j|^p
\end{equation}
using the triangle inequality and inequality \eqref{eq:pandr}
since
\[
	w_{nj}-w_{n,j-1}=\psi_{1-j}-\psi_{n-j+1}.
\]
Inequality \eqref{eq:sup} completes the proof since $\mathcal W_n(p)\to\infty$ as $n\to\infty$ and $\sum|\psi_j|<\infty$. The proof is complete.
\end{proof}

\section{Proofs of the main results}\label{sec:proofs}
\autoref{thm:wlln} and \autoref{prp:<} follows from the next lemma that establishes sufficient conditions for any sequence $\{b_n(p)\}=\{b_n(p):n\ge1\}\subset\mathbb R$ with $1<p<2$ to be the norming sequence in the M-Z WLLN for the linear process $\{X_k\}$.
\begin{lemma}\label{lemma:b_n(p)}
Let $1<p<2$. Suppose that $\sum|\psi_j|^p<\infty$, $x^p\Pr\{|\varepsilon_0|>x\}\to0$ as $x\to\infty$ and $\operatorname E\varepsilon_0=0$. If
\begin{equation}\label{eq:wllncond}
	\lim_{n\to\infty}\frac{\sup_{j\le n}|w_{nj}|}{b_n(p)}=0\qquad\text{and}\qquad W(p)=\sup_{n\ge1}\frac{\mathcal W_n(p)}{b_n(p)}<\infty,
\end{equation}
then
\[
	\frac{S_n}{b_n(p)}\to 0\quad\text{in probability as}\ n\to\infty.
\]
\end{lemma}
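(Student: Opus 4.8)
The plan is to realise $S_n$ as a weighted sum of the i.i.d.\ innovations and to run a Feller-type truncation argument. First I would reindex: writing $X_k=\sum_{j\ge0}\psi_j\varepsilon_{k-j}$ and summing over $k=1,\dots,n$, one gets $S_n=\sum_{j\le n}w_{nj}\varepsilon_j$ with $w_{nj}=\sum_{k=1}^n\psi_{k-j}$, a series of independent (non-identically distributed) terms. Its almost sure convergence, and the legitimacy of the reindexing when $\sum|\psi_j|=\infty$, follow by truncating the defining series of $X_k$ and passing to the limit, since $\sum_j|w_{nj}|^p\le\mathcal W_n^p(p)<\infty$, so that $\sum_jw_{nj}\varepsilon_j$ converges almost surely by the argument in the proof of \autoref{prp:convergence}. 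Write $a_n=\sup_{j\le n}|w_{nj}|$ and $g(x)=x^p\Pr\{|\varepsilon_0|>x\}$; then $g$ is bounded by $\|\varepsilon_0\|_{p,\infty}^p$, $g(x)\to0$ by hypothesis, and $b_n(p)/|w_{nj}|\ge b_n(p)/a_n\to\infty$ uniformly in $j$, so that $\rho_n:=\sup_{x\ge b_n(p)/a_n}g(x)\to0$.

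Next, truncate each term at level $b_n(p)$: set $\bar S_n=\sum_{j\le n}w_{nj}\varepsilon_jI_{\{|w_{nj}\varepsilon_j|\le b_n(p)\}}$ and argue in three steps. (i) \emph{Truncation is negligible:} $\Pr\{S_n\ne\bar S_n\}\le\sum_j\Pr\{|\varepsilon_0|>b_n(p)/|w_{nj}|\}=b_n(p)^{-p}\sum_j|w_{nj}|^pg(b_n(p)/|w_{nj}|)\le\rho_nb_n(p)^{-p}\mathcal W_n^p(p)\le\rho_nW(p)^p\to0$. (ii) \emph{The centering is negligible:} since $\operatorname E\varepsilon_0=0$ we have, as in the proof of \autoref{prp:convergence}, $|\operatorname E[\varepsilon_0I_{\{|\varepsilon_0|\le a\}}]|\le\operatorname E[|\varepsilon_0|I_{\{|\varepsilon_0|>a\}}]=a\Pr\{|\varepsilon_0|>a\}+\int_a^\infty\Pr\{|\varepsilon_0|>x\}\,\mathrm dx\le\tfrac p{p-1}a^{1-p}\sup_{x\ge a}g(x)$, whence, taking $a=b_n(p)/|w_{nj}|$ and summing over $j$, $|b_n(p)^{-1}\operatorname E\bar S_n|\le\tfrac p{p-1}\rho_nb_n(p)^{-p}\mathcal W_n^p(p)\le\tfrac p{p-1}\rho_nW(p)^p\to0$.

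(iii) \emph{The variance term:} by independence $\operatorname{Var}(\bar S_n)\le\sum_jw_{nj}^2\operatorname E[\varepsilon_0^2I_{\{|\varepsilon_0|\le b_n(p)/|w_{nj}|\}}]$, and by \eqref{eq:trmom}, $\operatorname E[\varepsilon_0^2I_{\{|\varepsilon_0|\le a\}}]\le2\int_0^ax^{1-p}g(x)\,\mathrm dx$. The crude bound $\operatorname E[\varepsilon_0^2I_{\{|\varepsilon_0|\le a\}}]\le\tfrac{2}{2-p}\|\varepsilon_0\|_{p,\infty}^pa^{2-p}$ only gives boundedness of $b_n(p)^{-2}\operatorname{Var}(\bar S_n)$; instead I would fix $\delta>0$, choose $x_0=x_0(\delta)$ with $\sup_{x\ge x_0}g(x)\le\delta$, and split the integral at $x_0$ to obtain $\operatorname E[\varepsilon_0^2I_{\{|\varepsilon_0|\le a\}}]\le C(\delta)+\tfrac{2\delta}{2-p}a^{2-p}$ for all $a>0$, with $C(\delta)=\tfrac{2}{2-p}\|\varepsilon_0\|_{p,\infty}^px_0^{2-p}$. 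Taking $a=b_n(p)/|w_{nj}|$ and summing then yields
\[
	\frac{\operatorname{Var}(\bar S_n)}{b_n(p)^2}\le\frac{C(\delta)}{b_n(p)^2}\sum_jw_{nj}^2+\frac{2\delta}{2-p}\cdot\frac{\mathcal W_n^p(p)}{b_n(p)^p};
\]
the second term is $\le\tfrac{2\delta}{2-p}W(p)^p$, and since $\sum_jw_{nj}^2\le a_n^{2-p}\mathcal W_n^p(p)$ (because $p<2$), the first term is $\le C(\delta)W(p)^p(a_n/b_n(p))^{2-p}\to0$ by the first condition in \eqref{eq:wllncond}. Since $\delta>0$ is arbitrary, $b_n(p)^{-2}\operatorname{Var}(\bar S_n)\to0$.

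Finally, Chebyshev's inequality together with (iii) gives $(\bar S_n-\operatorname E\bar S_n)/b_n(p)\to0$ in probability; combined with (ii) this gives $\bar S_n/b_n(p)\to0$ in probability, and then (i) transfers the conclusion to $S_n/b_n(p)$. The main obstacle is step (iii): one must exploit the full strength of the hypothesis $g(x)\to0$ — membership of $\varepsilon_0$ in $L_{p,\infty}$ alone is insufficient — together with \emph{both} parts of \eqref{eq:wllncond}, the $\delta$-part of the truncated second moment being absorbed by $W(p)<\infty$ and the complementary part by $a_n/b_n(p)\to0$ via the elementary inequality $\sum_j|w_{nj}|^2\le(\sup_j|w_{nj}|)^{2-p}\sum_j|w_{nj}|^p$. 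A minor technical point is the justification of the representation $S_n=\sum_jw_{nj}\varepsilon_j$, which requires a small limiting argument.
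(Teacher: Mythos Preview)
Your argument is correct, and it takes a different route from the paper's. Both proofs truncate the innovations at a level proportional to $b_n(p)/|w_{nj}|$, but the paper scales that level by an additional small parameter $\tau>0$: with $r_{nj}=c_\tau\, b_n(p)/|w_{nj}|$ the \emph{crude} bound $\operatorname E[\varepsilon_0^2I_{\{|\varepsilon_0|\le r_{nj}\}}]\le\tfrac{2}{2-p}\|\varepsilon_0\|_{p,\infty}^p r_{nj}^{2-p}$ already makes the Chebyshev term for the truncated, centred sum $S_n'$ at most (a constant times) $\tau$. The price is that the complementary tail sum $S_n''$ is no longer handled by a union bound; instead the paper symmetrizes, invokes the weak-$L^p$ sum inequality \eqref{eq:weakmomentsum}, and uses a separate lemma (\autoref{lemma:weakmoment}) to show $\sup_{j\le n}\|\varepsilon_{nj}''\|_{p,\infty}\to0$. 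Your approach fixes the truncation at $b_n(p)$, so the tail (your step~(i)) and the centring (your step~(ii)) are elementary, and the work is shifted to the variance step~(iii), where the $\delta$-splitting of $\int_0^a x^{1-p}g(x)\,\mathrm dx$ plays the role of the paper's floating $\tau$; the residual constant $C(\delta)$ is then killed by the inequality $\sum_j|w_{nj}|^2\le(\sup_j|w_{nj}|)^{2-p}\mathcal W_n^p(p)$ together with the first condition in \eqref{eq:wllncond}. Your route is more self-contained and avoids the $L_{p,\infty}$ machinery entirely; the paper's, by packaging the tail into a single weak-$L^p$ norm estimate, would adapt more readily to settings (e.g.\ vector-valued innovations) where a second-moment computation is unavailable.
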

We choose $b_n(p)=\mathcal W_n(p)$ for $n\ge1$ and use \autoref{prp:sup} to establish \autoref{thm:wlln}. We set $b_n(p)=n^{1/p}$ for $n\ge1$ and use \autoref{prp:W_n(p)bound} to obtain \autoref{prp:<}.

We make use of the technique of truncation in the proofs. Let us introduce the notations. Suppose that $\{r_{nj}\}=\{r_{nj}:n\ge1,j\in\mathbb Z\}$ are positive real numbers. Denote $\mu_{nj}'=\operatorname E[\varepsilon_0I_{\{|\varepsilon_0|\le r_{nj}\}}]$ and 	$\mu_{nj}''=\operatorname E[\varepsilon_0I_{\{|\varepsilon_0|>r_{nj}\}}]$ for $n\ge 1$ and $j\in\mathbb Z$, where $I_A$ denotes the indicator function of a set $A$. Set $\varepsilon_j=\varepsilon_{nj}'+\varepsilon_{nj}''$, where
\[
\varepsilon_{nj}'=\varepsilon_jI_{\{|\varepsilon_j|\le r_{nj}\}}-\mu_{nj}'\quad\text{and}\quad\varepsilon_{nj}''=\varepsilon_jI_{\{|\varepsilon_j|>r_{nj}\}}-\mu_{nj}''
\]
so that $\operatorname E\varepsilon_0=\operatorname E\varepsilon_{nj}'=\operatorname E\varepsilon_{nj}''=0$ for $n\ge 1$ and $j\in\mathbb Z$. By $\{\tilde\varepsilon_{nj}'':n\ge1,\,j\in\mathbb Z\}$ we denote an idependent copy of $\{\varepsilon_{nj}'':n\ge1,\,j\in\mathbb Z\}$ so that $\{\varepsilon_j''-\tilde\varepsilon_j'':n\ge1,\,j\in\mathbb Z\}$ are independent and symmetric random variables. For $n\ge1$, denote
\begin{equation}\label{eq:truncsums}
	S_n'=\sum_{j=-\infty}^nw_{nj}\varepsilon_{nj}'
	\quad\text{and}\quad	
	S_n''=\sum_{j=-\infty}^nw_{nj}\varepsilon_{nj}''.
\end{equation}
We need the following auxiliary lemma.
\begin{lemma}\label{lemma:weakmoment}
Suppose that $x^p\Pr\{|\varepsilon_0|>x\}\to0$ as $x\to\infty$ for some $p>1$ and $r_n\to\infty$ as $n\to\infty$, where $r_n=\inf_{j\le n}r_{nj}$. Then $\sup_{j\le n}\|\varepsilon_{nj}''\|_{p,\infty}\to 0\quad\text{as}\quad n\to\infty$.
\end{lemma}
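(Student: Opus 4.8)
The plan is to bound $\|\varepsilon_{nj}''\|_{p,\infty}$ uniformly in $j\le n$ by an expression depending only on $r_n=\inf_{j\le n}r_{nj}$, and then let $n\to\infty$. Recall that $\varepsilon_{nj}''=\varepsilon_j I_{\{|\varepsilon_j|>r_{nj}\}}-\mu_{nj}''$, where $\mu_{nj}''=\operatorname E[\varepsilon_0 I_{\{|\varepsilon_0|>r_{nj}\}}]$. First I would deal with the random part $\varepsilon_j I_{\{|\varepsilon_j|>r_{nj}\}}$: for $x>0$ one has $\Pr\{|\varepsilon_0 I_{\{|\varepsilon_0|>r_{nj}\}}|>x\}=\Pr\{|\varepsilon_0|>\max\{x,r_{nj}\}\}$, so that $x^p\Pr\{|\varepsilon_0 I_{\{|\varepsilon_0|>r_{nj}\}}|>x\}\le (\max\{x,r_{nj}\})^p\Pr\{|\varepsilon_0|>\max\{x,r_{nj}\}\}\le\sup_{y\ge r_{nj}}\bigl(y^p\Pr\{|\varepsilon_0|>y\}\bigr)\le\sup_{y\ge r_n}\bigl(y^p\Pr\{|\varepsilon_0|>y\}\bigr)=:\eta(r_n)$. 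Taking the supremum over $x>0$ and the $p$-th root gives $\|\varepsilon_0 I_{\{|\varepsilon_0|>r_{nj}\}}\|_{p,\infty}\le\eta(r_n)^{1/p}$ for every $j\le n$, and the hypothesis $x^p\Pr\{|\varepsilon_0|>x\}\to0$ together with $r_n\to\infty$ forces $\eta(r_n)\to0$.

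Next I would handle the deterministic shift $\mu_{nj}''$. Since $\|\varepsilon_0\|_{p,\infty}<\infty$ and $p>1$, $\varepsilon_0$ is integrable, so $\mu_{nj}''$ is finite; moreover, using the tail computation already carried out in the proof of \autoref{prp:convergence} (with $|\psi_j|^{-1}$ replaced by $r_{nj}$),
\[
	|\mu_{nj}''|\le\operatorname E[|\varepsilon_0|I_{\{|\varepsilon_0|>r_{nj}\}}]=r_{nj}\Pr\{|\varepsilon_0|>r_{nj}\}+\int_{r_{nj}}^\infty\Pr\{|\varepsilon_0|>x\}\,\mathrm dx.
\]
Both terms are controlled by $\eta(r_n)$: the first is at most $r_{nj}^{1-p}\eta(r_n)\le r_n^{1-p}\eta(r_n)\to 0$, and the second is at most $\eta(r_n)\int_{r_{nj}}^\infty x^{-p}\,\mathrm dx=\eta(r_n)\,r_{nj}^{1-p}/(p-1)\le\eta(r_n)\,r_n^{1-p}/(p-1)\to0$, using $p>1$. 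Hence $\sup_{j\le n}|\mu_{nj}''|\to0$, and since a constant $c$ satisfies $\|c\|_{p,\infty}=\sup_{x>0}x\,I_{\{x<|c|\}}\cdot|c|^{p-1}\cdot$ wait—more simply $\|c\|_{p,\infty}=|c|\cdot\sup_{0<x<|c|}(x/|c|)^{?}$; cleanly, $\|c\|_{p,\infty}=|c|$ because $x^p\Pr\{|c|>x\}=x^p$ for $x<|c|$ has supremum $|c|^p$. So $\sup_{j\le n}\|\mu_{nj}''\|_{p,\infty}=\sup_{j\le n}|\mu_{nj}''|\to0$.

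Finally I would combine the two pieces with the quasi-norm triangle inequality \eqref{eq:weakmomenttr}: for each $j\le n$,
\[
	\|\varepsilon_{nj}''\|_{p,\infty}\le\max\{2,2^{1/p}\}\bigl(\|\varepsilon_0 I_{\{|\varepsilon_0|>r_{nj}\}}\|_{p,\infty}+\|\mu_{nj}''\|_{p,\infty}\bigr)\le\max\{2,2^{1/p}\}\bigl(\eta(r_n)^{1/p}+\sup_{i\le n}|\mu_{ni}''|\bigr),
\]
and taking $\sup_{j\le n}$ followed by $n\to\infty$ yields the claim. I do not anticipate a serious obstacle here; the only mild subtlety is making sure every bound is expressed in terms of $r_n=\inf_{j\le n}r_{nj}$ rather than the individual $r_{nj}$, which is exactly what the monotonicity of $y\mapsto y^p\Pr\{|\varepsilon_0|>y\}$ on tails (encoded in $\eta$) and the factor $r_{nj}^{1-p}\le r_n^{1-p}$ give us, and in invoking $p>1$ to guarantee both integrability of $\varepsilon_0$ and convergence of $\int_{r_{nj}}^\infty x^{-p}\,\mathrm dx$.
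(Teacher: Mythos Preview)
Your argument is correct. Both you and the paper reduce the problem to two ingredients: (i) the weak-$L^p$ norm of the uncentred tail $\varepsilon_0 I_{\{|\varepsilon_0|>r_{nj}\}}$ is bounded, uniformly in $j\le n$, by a quantity depending only on $r_n$ that tends to $0$; and (ii) the centring constants $\mu_{nj}''$ vanish uniformly in $j\le n$. The difference is in how the centred variable is reassembled. You split $\varepsilon_{nj}''$ into its random and constant pieces via the quasi-triangle inequality~\eqref{eq:weakmomenttr} and bound each separately; the paper instead keeps them together, dominates $|\mu_{nj}''|$ by $M_n''=\operatorname E[|\varepsilon_0|I_{\{|\varepsilon_0|>r_n\}}]$ and the truncation level $r_{nj}$ by $r_n$ \emph{inside} the tail probability, and then does an explicit three-range case analysis of $x^p\Pr\{|\varepsilon_0|I_{\{|\varepsilon_0|>r_n\}}>x-M_n''\}$ according to whether $0<x<M_n''$, $M_n''\le x\le M_n''+r_n$, or $x>M_n''+r_n$. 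Your route is a bit more modular and sidesteps the case split at the cost of the harmless factor $\max\{2,2^{1/p}\}$; the paper's route is marginally tighter but needs the case analysis. (One cosmetic remark: the interrupted aside where you compute $\|c\|_{p,\infty}=|c|$ should be tidied up before a final write-up; the conclusion is correct since $\sup_{x>0}x^p\Pr\{|c|>x\}=\sup_{0<x<|c|}x^p=|c|^p$.)
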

\begin{proof}
Let us denote $M_n''=\operatorname E[|\varepsilon_0|I_{\{|\varepsilon_0|>r_n\}}]$. Using the triangle inequality and the fact that $|\mu_{nj}''|\le M_n''$, we obtain
\begin{align*}
\sup_{j\le n}\|\varepsilon_{nj}''\|_{p,\infty}^p
	&=\sup_{j\le n}\sup_{x>0}(x^p\Pr\{|\varepsilon_0I_{\{|\varepsilon_0|>r_{nj}\}}-\mu_{nj}''|>x\})\\
	&\le\sup_{x>0}(x^p\Pr\{|\varepsilon_0|I_{\{|\varepsilon_0|>r_n\}}>x-M_n''\}).
\end{align*}
Observe that
\[
\Pr\{|\varepsilon_0|I_{\{|\varepsilon_0|>r_n\}}>x-M_n''\}=
	\begin{cases}
		1,&\text{if } 0<x<M_n'',\\
		\Pr\{|\varepsilon_0|>r_n\},&\text{if }M_n''\le x\le M_n''+r_n,\\
		\Pr\{|\varepsilon_0|>x\},&\text{if } x>M_n''+r_n.
	\end{cases}
\]
Hence,
\begin{multline*}
	\sup_{x>0}(x^p\Pr\{|\varepsilon_0|I_{\{|\varepsilon_0|>r_n\}}>x-M_n''\})\\
	\le\max\{(M_n'')^p,(M_n''+r_n)^p\Pr\{|\varepsilon_0|>r_n\},\sup_{x>M_n''+r_n}(x^p\Pr\{|\varepsilon_0|>x\})\}\to0
\end{multline*}
as $n\to\infty$ since $x^p\Pr\{|\varepsilon_0|>x\}\to0$ as $x\to\infty$ and $r_n\to\infty$ as $n\to\infty$, so that $\lim_{n\to\infty}M_n''=0$.
\end{proof}

\subsection{Proof of \autoref{lemma:b_n(p)}}
First, we show that $b_n^{-1}(p)S_n''$ converges in probability to $0$ as $n\to\infty$ in \autoref{prp:wlln>} and then we move to the proof of \autoref{lemma:b_n(p)}.
\begin{prp}\label{prp:wlln>}
Let $1<p<2$. If  $x^p\Pr\{|\varepsilon_0|>x\}\to0$ as $n\to\infty$ and $\lim_{n\to\infty}r_n=\infty$, where $r_n=\inf_{j\le n}r_{nj}$, then
\[
	b_n^{-1}(p)S_n''\to0
\]
in probability as $n\to\infty$.
\end{prp}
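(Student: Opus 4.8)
The plan is to use the symmetrization device set up just above. I would pass to the symmetrized sum $\tilde S_n''=\sum_{j\le n}w_{nj}\tilde\varepsilon_{nj}''$ and note that $S_n''-\tilde S_n''=\sum_{j\le n}w_{nj}(\varepsilon_{nj}''-\tilde\varepsilon_{nj}'')$ is a series of independent \emph{symmetric} summands, to which inequality \eqref{eq:weakmomentsum} applies. Applying it to finite partial sums and letting the number of summands tend to infinity (a Fatou-type argument, since $x\mapsto x^p\Pr\{|\cdot|>x\}$ is lower semicontinuous under almost sure convergence), and then using the quasi-triangle inequality \eqref{eq:weakmomenttr}, the identity $\|\tilde\varepsilon_{nj}''\|_{p,\infty}=\|\varepsilon_{nj}''\|_{p,\infty}$, and $\max\{2,2^{1/p}\}=2$ for $1<p<2$, one gets
\[
	\|S_n''-\tilde S_n''\|_{p,\infty}^p
	\le C(p)\sum_{j\le n}|w_{nj}|^p\|\varepsilon_{nj}''-\tilde\varepsilon_{nj}''\|_{p,\infty}^p
	\le 4^pC(p)\,\mathcal W_n^p(p)\sup_{j\le n}\|\varepsilon_{nj}''\|_{p,\infty}^p .
\]
Dividing by $b_n^p(p)$: the ratio $\mathcal W_n(p)/b_n(p)$ is bounded (standing hypothesis of \autoref{lemma:b_n(p)}) and $\sup_{j\le n}\|\varepsilon_{nj}''\|_{p,\infty}\to0$ by \autoref{lemma:weakmoment}, whose hypothesis $r_n\to\infty$ is precisely what we assume here; hence $b_n^{-1}(p)(S_n''-\tilde S_n'')\to0$ in the $L_{p,\infty}$ quasi-norm.

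It then remains to desymmetrize. Let $m_n$ be a median of $U_n:=b_n^{-1}(p)S_n''$. The weak symmetrization inequality $\Pr\{|U_n-m_n|>t\}\le2\Pr\{|U_n-\tilde U_n|>t\}$ gives $\|U_n-m_n\|_{p,\infty}\le2^{1/p}\|b_n^{-1}(p)(S_n''-\tilde S_n'')\|_{p,\infty}\to0$. Since $p>1$, inequality \eqref{eq:weakmoment} yields $\operatorname E|U_n-m_n|\le\tfrac p{p-1}\|U_n-m_n\|_{p,\infty}\to0$; in particular $U_n\in L_1$ for large $n$, and since $\operatorname E\varepsilon_{nj}''=0$ for every $j$ and the series defining $S_n''$ converges almost surely and (by the boundedness of the $L_{p,\infty}$ quasi-norm of its partial sums, hence their uniform integrability) in $L_1$, we have $\operatorname ES_n''=0$. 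Hence $|m_n|=|\operatorname E(U_n-m_n)|\le\operatorname E|U_n-m_n|\to0$, and by \eqref{eq:weakmomenttr} once more (the weak norm of the constant $m_n$ equals $|m_n|$),
\[
	\|U_n\|_{p,\infty}\le2\bigl(\|U_n-m_n\|_{p,\infty}+|m_n|\bigr)\longrightarrow0 ,
\]
so that $b_n^{-1}(p)S_n''\to0$ in probability.

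The delicate point is the desymmetrization: because $\|\cdot\|_{p,\infty}$ is merely a quasi-norm (triangle constant $2>1$), one cannot recover a bound on $\|U_n\|_{p,\infty}$ from one on $\|U_n-\tilde U_n\|_{p,\infty}$ by a naive triangle-inequality argument, so one must instead control the median of $U_n$ via its mean; this uses in an essential way that $p>1$ (so that $L_{p,\infty}\hookrightarrow L_1$) and that $S_n''$ is centred by construction. A minor technicality to spell out is the passage from the finite-sum form of \eqref{eq:weakmomentsum} to the countable index set $\{j\le n\}$. (When $r_{nj}=b_n(p)/|w_{nj}|$ one can in fact bypass symmetrization: writing $\varepsilon_{nj}''=\varepsilon_jI_{\{|\varepsilon_j|>r_{nj}\}}-\mu_{nj}''$ exhibits $S_n''$ as a deterministic term of size $o(b_n(p))$ plus a sum that vanishes outside an event of probability at most $(\mathcal W_n(p)/b_n(p))^p\sup_{x\ge r_n}x^p\Pr\{|\varepsilon_0|>x\}\to0$; but the symmetrization route is the one the preceding lemmas are tailored to.)
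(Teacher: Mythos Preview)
Your argument is correct, but it is organized differently from the paper's proof, and the differences are worth noting.

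\textbf{Handling the infinite index set.} You extend inequality \eqref{eq:weakmomentsum} to the series $\sum_{j\le n}w_{nj}(\varepsilon_{nj}''-\tilde\varepsilon_{nj}'')$ by applying it to finite partial sums and passing to the limit via the lower semicontinuity of $\|\cdot\|_{p,\infty}$ under a.s.\ convergence (a Fatou argument). This is valid. The paper instead sidesteps the issue entirely: it splits $S_n''$ into an a.s.\ small tail $\sum_{j\le N(n)}w_{nj}\varepsilon_{nj}''$ (using that the defining series converges a.s.) and a genuinely finite sum $\sum_{j=N(n)+1}^n w_{nj}\varepsilon_{nj}''$, so that \eqref{eq:weakmomentsum} is only ever applied to finitely many summands.

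\textbf{Desymmetrization.} Here the paper is noticeably simpler. Rather than bounding $\|S_n''-\tilde S_n''\|_{p,\infty}$ and then recovering a bound on $S_n''$ through the weak symmetrization inequality, medians, the embedding $L_{p,\infty}\hookrightarrow L_1$, and the uniform-integrability argument you use to justify $\operatorname E S_n''=0$, the paper works in $L_1$ from the start: it applies Markov's inequality to the finite sum, writes $\operatorname E\bigl|\sum w_{nj}\varepsilon_{nj}''\bigr|=\operatorname E\bigl|\sum w_{nj}(\varepsilon_{nj}''-\operatorname E\tilde\varepsilon_{nj}'')\bigr|\le\operatorname E\bigl|\sum w_{nj}(\varepsilon_{nj}''-\tilde\varepsilon_{nj}'')\bigr|$ by Jensen, and only then passes to $\|\cdot\|_{p,\infty}$ via \eqref{eq:weakmoment}. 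Since desymmetrization in $L_1$ is a one-line Jensen step, the median machinery and the verification that $\operatorname E S_n''=0$ are never needed.

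In short: your route is sound, but the paper's ordering --- Markov first, symmetrize in $L_1$, then move to $L_{p,\infty}$ on a finite sum --- avoids both technicalities you flag at the end of your sketch.
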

\begin{proof}
The functional $\|\cdot\|_{p,\infty}$ is only a quasi-norm and it is not necessarily continuous, so that inequality \eqref{eq:weakmomentsum} might not hold for series. Hence, we split the series $\sum_{j=-\infty}^nw_{nj}\varepsilon_{nj}''$ into two parts.
For $N<n$ and $\delta>0$, we have that
\begin{multline}\label{eq:trun>}
	\Pr\{|b_n^{-1}(p)S_n''|>\delta/2\}
	\le\Pr\biggl\{\biggl|b_n^{-1}(p)\sum_{j=-\infty}^Nw_{nj}\varepsilon_j''\biggr|>\delta/4\biggr\}\\
	+\Pr\biggl\{\biggl|b_n^{-1}(p)\sum_{j=N+1}^nw_{nj}\varepsilon_j''\biggr|>\delta/4\biggr\}.
\end{multline}
The series $\sum_{j=-\infty}^nw_{nj}\varepsilon_j''$ converges almost surely for each $n\ge1$. Therefore $b_n^{-1}(p)\sum_{j=-\infty}^Nw_{nj}\varepsilon_j''\to0$ almost surely as $N\to-\infty$ for each $n\ge1$ and there exists $N(n)<n$ for each $n\ge1$ and each $\delta>0$ such that
\[
	\biggl|b_n^{-1}(p)\sum_{j=-\infty}^{N(n)}w_{nj}\varepsilon_j''\biggr|\le\delta/4
\]
almost surely and the first term on the right side of~\eqref{eq:trun>} is $0$.

Using Markov's inequality and the fact that $\operatorname E\tilde\varepsilon_{nj}''=0$ for each~$n\ge1$ and each~$j\in\mathbb Z$, we obtain
\begin{align*}
	\Pr\biggl\{\biggl|b_n^{-1}(p)\sum_{j=N(n)+1}^nw_{nj}\varepsilon_j''\biggr|>\delta/4\biggr\}
	&<\frac4\delta\operatorname E\biggl|b_n^{-1}(p)\sum_{j=N(n)+1}^nw_{nj}(\varepsilon_{nj}''-\operatorname E\tilde\varepsilon_{nj}'')\biggr|\\
	&\le\frac4\delta\operatorname E\biggl|b_n^{-1}(p)\sum_{j=N(n)+1}^nw_{nj}(\varepsilon_{nj}''-\tilde\varepsilon_{nj}'')\biggr|.
\end{align*}
By inequalities~\eqref{eq:weakmoment},\eqref{eq:weakmomentsum} and~\eqref{eq:weakmomenttr},
\begin{align*}
	&\Pr\biggl\{\biggl|b_n^{-1}(p)\sum_{j=N(n)+1}^nw_{nj}\varepsilon_{nj}''\biggr|>\delta/2\biggr\}\\
	&\qquad\le\frac4\delta\Bigl(\frac p{p-1}\Bigr)\biggl\|b_n^{-1}(p)\sum_{j=N(n)+1}^nw_{nj}(\varepsilon_{nj}''-\tilde\varepsilon_{nj}'')\biggr\|_{p,\infty}\\
	&\qquad\le\frac{4C(p)^{1/p}}\delta\Bigl(\frac p{p-1}\Bigr)\cdot b_n^{-1}(p)\biggl(\sum_{j=N(n)+1}^n\|w_{nj}(\varepsilon_{nj}''-\tilde\varepsilon_{nj}'')\|_{p,\infty}^p\biggr)^{1/p}\\
	&\qquad\le\frac{16C(p)^{1/p}}\delta\Bigl(\frac p{p-1}\Bigr)\cdot b_n^{-1}(p)\biggl(\sum_{j=N(n)+1}^n|w_{nj}|^p\|\varepsilon_{nj}''\|_{p,\infty}^p\biggr)^{1/p}.
\end{align*}
Since $(\sum_{j=N(n)+1}^n|w_{nj}|^p)^{1/p}\le\mathcal W_n(p)$, we have that
\begin{multline*}
\Pr\biggl\{\biggl|b_n^{-1}\sum_{j=N(n)+1}^nw_{nj}\varepsilon_{nj}''\biggr|>\delta/2\biggr\}\\
	\le\frac{16C^{1/p}(p)}\delta\Bigl(\frac p{p-1}\Bigr)\frac{\mathcal W_n(p)}{b_n(p)}\Bigl(\sup_{j\le n}\|\varepsilon_{nj}''\|_{p,\infty}^p\Bigr)^{1/p}.
\end{multline*}
$\sup_{j\le n}\|\varepsilon_{nj}''\|_{p,\infty}\to0$ as $n\to\infty $ using \autoref{lemma:weakmoment} since $x^p\Pr\{\|\varepsilon_0\|>x\}\to0$ as $x\to\infty$ and $r_n\to\infty$ as $n\to\infty$. The proof is complete.
\end{proof}

\begin{proof}[Proof of \autoref{lemma:b_n(p)}] For $n\ge1$, $j\in\mathbb Z$, $\tau>0$ and $\delta>0$, set
\begin{equation}\label{eq:r_njnonsummable}
r_{nj}
	=\Bigl[\frac{\tau\delta^2(2-p)}{8\|\varepsilon_0\|_{p,\infty}^pW(p)}\Bigr]^{\frac1{2-p}}\cdot\frac{b_n(p)}{|w_{nj}|}.
\end{equation}
Using Chebyshev's inequality and \eqref{eq:trmom}, we obtain
\[
	\limsup_{n\to\infty}\Pr\{|b_n^{-1}(p)S_n'|>\delta/2\}
	\le4\delta^{-2}\limsup_{n\to\infty}\biggl[b_n^{-2}(p)\sum_{j=-\infty}^n|w_{nj}|^2\operatorname E|\varepsilon_{nj}'|^2\biggr]
	\le\tau.
\]
Since $r_n=\inf_{j\le n}r_{nj}\to\infty$ as $n\to\infty$, where $r_{nj}$ is given by \eqref{eq:r_njnonsummable}, we have that $\Pr\{|b_n^{-1}S_n''|>\delta/2\}\to0$ as $n\to\infty$ for each $\delta>0$ by \autoref{prp:wlln>}. Hence, $\limsup_{n\to\infty}\Pr\{|b_n^{-1}(p)S_n|>\delta\}\le\tau$ for each $\delta>0$ and $\tau>0$. The proof is complete.
\end{proof}

\subsection{Equivalence of moment assumptions}\label{subsec:equiva}
We show that $x^p\Pr\{|\varepsilon_0|>x\}\to0$ as $x\to\infty$ and $x^p\Pr\{|X_0|>x\}\to0$ as $x\to\infty$ are equivalent conditions if $1<p<2$, $\operatorname E\varepsilon_0=0$ and $\sum|\psi|^p<\infty$.
\begin{prp}\label{prp:equiva}
Let $1<p<2$ and suppose that $\operatorname E\varepsilon_0=0$ and $\sum|\psi_j|^p<\infty$. Then $x^p\Pr\{|\varepsilon_0|>x\}\to0$ as $x\to\infty$ if and only if $x^p\Pr\{|X_0|>x\}\to0$ as $x\to\infty$.
\end{prp}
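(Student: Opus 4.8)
\emph{Overview.} I would prove the two implications separately. Both rely on the fact that, by case~(i) of \autoref{prp:convergence} (applicable since $1<p<2$, so $q=\min\{p,2\}=p$), the series defining $X_0$ — and the analogous series built from symmetrized innovations — converge almost surely. I tacitly assume $\psi_{j_0}\neq0$ for some $j_0\ge0$; otherwise $X_0\equiv0$ and the equivalence is vacuous. The quantitative core is the following: if $\xi_1,\dots,\xi_m$ are independent symmetric elements of $L_{p,\infty}$, then combining the tail bound $\Pr\{|\xi|>x\}\le x^{-p}\|\xi\|_{p,\infty}^p$ with \eqref{eq:weakmomentsum} gives $x^p\Pr\{|\sum_{i=1}^m\xi_i|>x\}\le C(p)\sum_{i=1}^m\|\xi_i\|_{p,\infty}^p$ for every $x>0$; for a countable independent symmetric family one applies this to finite truncations and passes to the limit via the portmanteau theorem (the truncations converge a.s., hence in distribution), so the bound survives.

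\emph{From $\varepsilon_0$ to $X_0$.} Assume $x^p\Pr\{|\varepsilon_0|>x\}\to0$; then $\|\varepsilon_0\|_{p,\infty}<\infty$. Symmetrize: let $\{\tilde\varepsilon_k\}$ be an independent copy of $\{\varepsilon_k\}$, put $\varepsilon_k^{\mathrm s}=\varepsilon_k-\tilde\varepsilon_k$ and $X_0^{\mathrm s}=\sum_{j\ge0}\psi_j\varepsilon_{-j}^{\mathrm s}=X_0-\tilde X_0$. Then $\|\varepsilon_0^{\mathrm s}\|_{p,\infty}<\infty$ by \eqref{eq:weakmomenttr} and $x^p\Pr\{|\varepsilon_0^{\mathrm s}|>x\}\to0$ because $\Pr\{|\varepsilon_0^{\mathrm s}|>x\}\le2\Pr\{|\varepsilon_0|>x/2\}$. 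Fix $N$ and split $X_0^{\mathrm s}=Y_N+Z_N$, where $Y_N=\sum_{j=0}^N\psi_j\varepsilon_{-j}^{\mathrm s}$ and $Z_N=\sum_{j>N}\psi_j\varepsilon_{-j}^{\mathrm s}$. For the finite block, $\{|Y_N|>x\}\subseteq\bigcup_{j=0}^N\{|\psi_j\varepsilon_{-j}^{\mathrm s}|>x/(N+1)\}$ and $x^p\Pr\{|\varepsilon_0^{\mathrm s}|>x/((N+1)|\psi_j|)\}\to0$ for each $j$ with $\psi_j\neq0$, so $x^p\Pr\{|Y_N|>x\}\to0$ as $x\to\infty$. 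For the tail, the estimate above applied to the independent symmetric terms $\{\psi_j\varepsilon_{-j}^{\mathrm s}\}_{j>N}$ gives $x^p\Pr\{|Z_N|>x\}\le\rho_N:=C(p)\|\varepsilon_0^{\mathrm s}\|_{p,\infty}^p\sum_{j>N}|\psi_j|^p$ for all $x>0$. Hence $\limsup_{x\to\infty}x^p\Pr\{|X_0^{\mathrm s}|>x\}\le2^p\rho_N$ for every $N$, and $\rho_N\to0$, so $x^p\Pr\{|X_0^{\mathrm s}|>x\}\to0$. Finally desymmetrize: if $m$ is a median of $X_0$, then $\Pr\{|X_0-m|>x\}\le2\Pr\{|X_0^{\mathrm s}|>x\}$, and $\{|X_0|>x\}\subseteq\{|X_0-m|>x-|m|\}$ together with $x^p/(x-|m|)^p\to1$ gives $x^p\Pr\{|X_0|>x\}\to0$.

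\emph{From $X_0$ to $\varepsilon_0$.} Here independence does everything. Write $X_0=\psi_{j_0}\varepsilon_{-j_0}+R$ with $R=\sum_{j\ge0,\,j\neq j_0}\psi_j\varepsilon_{-j}$ almost surely finite and independent of $\varepsilon_{-j_0}$, and $\varepsilon_{-j_0}$ equal in law to $\varepsilon_0$. Choose $b>0$ with $\Pr\{R\ge-b\}\ge\tfrac12$ and $\Pr\{R\le b\}\ge\tfrac12$. Since $\{\psi_{j_0}\varepsilon_{-j_0}>x+b\}\cap\{R\ge-b\}\subseteq\{X_0>x\}$, independence gives $\Pr\{\psi_{j_0}\varepsilon_{-j_0}>x+b\}\le2\Pr\{X_0>x\}$, and symmetrically $\Pr\{\psi_{j_0}\varepsilon_{-j_0}<-(x+b)\}\le2\Pr\{X_0<-x\}$; adding, $\Pr\{|\varepsilon_0|>(x+b)/|\psi_{j_0}|\}=\Pr\{|\psi_{j_0}\varepsilon_{-j_0}|>x+b\}\le2\Pr\{|X_0|>x\}$. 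Putting $x=|\psi_{j_0}|y-b$ yields $y^p\Pr\{|\varepsilon_0|>y\}\le2\bigl(y/(|\psi_{j_0}|y-b)\bigr)^p(|\psi_{j_0}|y-b)^p\Pr\{|X_0|>|\psi_{j_0}|y-b\}\to0$ as $y\to\infty$.

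\emph{Main difficulty.} The delicate point is the first implication. Because $\|\cdot\|_{p,\infty}$ is only a quasi-norm and is not continuous, \eqref{eq:weakmomentsum} cannot be applied directly to the infinite series $\sum_j\psi_j\varepsilon_{-j}^{\mathrm s}$; this forces the split into a finite block (handled by a lossy union bound that still yields $o(x^{-p})$) and an infinite tail reached only through a truncation-and-weak-limit argument. The symmetrization and median-based desymmetrization are routine but necessary, since \eqref{eq:weakmomentsum} requires symmetric summands.
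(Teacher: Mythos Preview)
Your proof is correct, and both directions are handled cleanly. For the implication $X_0\Rightarrow\varepsilon_0$ the paper simply cites Lemma~3.7 of \cite{juodis2009}, whereas you supply the natural direct argument via independence of $\psi_{j_0}\varepsilon_{-j_0}$ and the remainder~$R$. For $\varepsilon_0\Rightarrow X_0$, both you and the paper begin by symmetrizing, but then diverge. The paper truncates each innovation at a level $\delta x$ depending on~$x$: the ``small'' part $\sum_j\psi_j\varepsilon_{-j}I_{\{|\varepsilon_{-j}|\le\delta x\}}$ is bounded by Chebyshev and a second-moment estimate (using $\sum\psi_j^2<\infty$, which follows from $\sum|\psi_j|^p<\infty$ since $p<2$), yielding a contribution of order $\delta^{2-p}$; the ``large'' part is reduced to a finite sum by cutting off an almost-surely small tail of the series and then handled via \eqref{eq:weakmomentsum}, its contribution vanishing as $x\to\infty$; one concludes by letting $\delta\to0$. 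Your split is instead over the \emph{index} set: a finite block $Y_N$ for which a crude union bound already gives $o(x^{-p})$, plus a tail $Z_N$ whose weak-$L_p$ quasi-norm you bound uniformly in~$x$ via \eqref{eq:weakmomentsum} on finite truncations followed by portmanteau, sending $N\to\infty$ at the end. Your route is marginally more elementary---no second-moment computation, no $x$-dependent truncation level---while the paper's truncation scheme mirrors the technique used throughout (e.g.\ in the proofs of \autoref{lemma:b_n(p)} and \autoref{prp:wlln>}).
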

\begin{proof}
We only prove sufficiency. See Lemma 3.7 of \cite{juodis2009} for the proof of necessity.

Suppose that $\{\tilde\varepsilon_k:k\in\mathbb Z\}$ is an independent copy of $\{\varepsilon_k:k\in\mathbb Z\}$ so that $\{\varepsilon_k-\tilde\varepsilon_k:k\in\mathbb Z\}$ are independent and symmetric random variables. Let $a>0$ be such that $\Pr\{|X_0|\le a\}\ge1/2$. We have that
\begin{align*}
\Pr\biggl\{\biggl|\sum_{j=0}^\infty\psi_j(\varepsilon_{-j}-\tilde\varepsilon_{-j})\biggr|>t\biggr\}
	&\ge\Pr\{|X_0|>t+a\}\Pr\{|X_0|\le a\}\\
	&\ge\frac12\Pr\{|X_0|>t+a\}.
\end{align*}
Hence, we assume that $\{\varepsilon_k:k\in\mathbb Z\}$ are symmetric in the remainder of the proof since the general case can be proved using symmetrization.

We have that
\[
	x^p\Pr\{|X_0|>2x\}
	\le x^p\Pr\biggl\{\biggl|\sum_{j=0}^\infty\psi_j\varepsilon_{-j}'\biggr|>x\biggr\}
		+x^p\Pr\biggl\{\biggl|\sum_{j=0}^\infty\psi_j\varepsilon_{-j}''\biggr|>x\biggr\},
\]
where $\varepsilon_j'=\varepsilon_jI_{\{|\varepsilon_j|\le \delta x\}}$ and $\varepsilon_j''=\varepsilon_jI_{\{|\varepsilon_j|>\delta x\}}$.

Using Chebyshev's inequality and \eqref{eq:trmom},
\begin{align*}
	x^p\Pr\biggl\{\biggl|\sum_{j=0}^\infty\psi_j\varepsilon_{-j}'\biggr|>x\biggr\}
	&\le x^{p-2}\operatorname E\biggl|\sum_{j=0}^\infty\psi_j\varepsilon_{-j}'\biggr|^2
	=x^{p-2}\sum_{j=0}^\infty\psi_j^2\operatorname E[\varepsilon_0^2I_{\{|\varepsilon_0|\le \delta x\}}]\\
	&\le \frac{2\|\varepsilon_0\|_{p,\infty}^p}{2-p}\sum_{j=0}^\infty\psi_j^2\delta^{2-p}.
\end{align*}
Since the series $\sum_{j=0}^\infty\psi_j\varepsilon_{-j}''$ converges almost surely for each $x>0$ and $\delta>0$, there exists $J\ge0$ that depends on $\delta$ and $x$ such that $|\sum_{j=J}^\infty\psi_j\varepsilon_{-j}''|\le x/2$ almost surely. Hence,
\begin{align*}
	\Pr\biggl\{\biggl|\sum_{j=0}^\infty\psi_j\varepsilon_{-j}''\biggr|>x\biggr\}
	&\le\Pr\biggl\{\biggl|\sum_{j=0}^J\psi_j\varepsilon_{-j}''\biggr|>x/2\biggr\}+\Pr\biggl\{\biggl|\sum_{j=J+1}^\infty\psi_j\varepsilon_{-j}''\biggr|>x/2\biggr\}\\
	&\le\Pr\biggl\{\biggl|\sum_{j=0}^J\psi_j\varepsilon_{-j}''\biggr|>x/2\biggr\}.
\end{align*}
Using the inequality $\Pr\{|\xi|>x\}\le\|\xi\|_{p,\infty}^p/x^p$ for a random variable $\xi$, $x>0$ and $p>0$ and inequality \eqref{eq:weakmomentsum}, we obtain
\begin{align*}
	x^p\Pr\biggl\{\biggl|\sum_{j=0}^J\psi_j\varepsilon_{-j}''\biggr|>x/2\biggr\}
	&\le 2^p\biggl\|\sum_{j=0}^J\psi_j\varepsilon_{-j}''\biggr\|_{p,\infty}^p
	\le 2^pC(p)\sum_{j=0}^J\|\psi_j\varepsilon_{-j}''\|_{p,\infty}^p\\
	&\le 2^pC(p)\sum_{j=0}^\infty|\psi_j|^p\|\varepsilon_0''\|_{p,\infty}^p.
\end{align*}
We have that $\|\varepsilon_0''\|_{p,\infty}^p=\sup_{x>0}(x^p\Pr\{|\varepsilon_0|>\max\{\delta t,x\}\})\to0$ as $t\to\infty$ and, for $\delta>0$,
\[
	\limsup_{x\to\infty}x^p\Pr\{|X_0|>x\}
	\le\frac{\|\varepsilon_0\|_{p,\infty}^p}{2-p}\sum_{j=0}^\infty\psi_j^2\cdot \delta^{2-p}.\qedhere
\]
\end{proof}

\subsection{Proof of the convergence rate}
In this subsection, we prove \autoref{prp:rate}.
\begin{proof}[Proof of \autoref{prp:rate}]
Set $r_{nj}=n^{1/p}$ for each $n\ge1$ and each $j\in\mathbb Z$. We have that
\begin{multline}\label{eq:sllnseries}
	\sum_{n=1}^\infty n^{-1}\Pr\{|\mathcal W_n^{-1}(p)S_n|>\delta\}
	\le\sum_{n=1}^\infty n^{-1}\Pr\{|\mathcal W_n^{-1}(p)S_n'|>\delta/2\}\\
		+\sum_{k=1}^\infty n^{-1}\Pr\{|\mathcal W_n^{-1}(p)S_n''|>\delta/2\}.
\end{multline}

Using Markov's inequality, the von Bahr-Esseen inequality (see \cite{vonbahr1965}) and~\eqref{eq:condrate}, we obtain
\begin{align*}
	\sum_{n=N}^\infty k^{-1}\Pr\{|\mathcal W_n^{-1}(p)S_n'|>\delta/2\}
	&\le\frac{2^{q+1}}{\delta^q}\sum_{n=N}^\infty n^{-1}\biggl(\frac{\mathcal W_n(q)}{\mathcal W_n(p)}\biggr)^q\operatorname E|\varepsilon_{k0}'|^q\\
	&\le M\frac{2^{q+1}}{\delta^q}\sum_{n=N}^\infty \frac{\operatorname E|\varepsilon_{n0}'|^q}{n^{q/p}}
\end{align*}
for $N\ge1$ such that
\[
\frac{\mathcal W_n(q)}{\mathcal W_n(p)}\le Mn^{1/q-1/p}
\]
for $n\ge N$, where $M$ is a positive constant. The series
\[
	\sum_{n=1}^\infty \frac{\operatorname E|\varepsilon_{n0}'|^q}{n^{q/p}}
\]
converges if and only if $\operatorname E|\varepsilon_0|^p<\infty$. Hence, the first series on the right side of~\eqref{eq:sllnseries} converges.

Using Markov's inequality and the von Bahr-Esseen inequality, we have that
\[
	\sum_{n=1}^\infty n^{-1}\Pr\{|\mathcal W_n^{-1}(p)S_n''|>\delta/2\}
	\le\frac{2^{p+1}}{\delta^p}\sum_{n=1}^\infty n^{-1}\operatorname E|\varepsilon_{n0}''|^p
\]
and
\begin{align*}
 	\sum_{n=1}^\infty n^{-1}\operatorname E|\varepsilon_{n0}''|^p
	&=\sum_{n=1}^\infty n^{-1}\sum_{l=n}^\infty\operatorname E[|\varepsilon_0|^pI_{\{l^{1/p}<|\varepsilon_0|\le(l+1)^{1/p}\}}]\\
	&=\sum_{l=1}^\infty\sum_{n=1}^ln^{-1}\operatorname E[|\varepsilon_0|^pI_{\{l^{1/p}<|\varepsilon_0|\le(l+1)^{1/p}\}}]\\
	&\le\operatorname E|\varepsilon_0|^p+\sum_{l=1}^\infty\log l\operatorname E[|\varepsilon_0|^pI_{\{l^{1/p}<|\varepsilon_0|\le(l+1)^{1/p}\}}]\\
	&\le\operatorname E|\varepsilon_0|^p+p\operatorname E[|\varepsilon_0|^p\log|\varepsilon_0|I_{\{|\varepsilon_0|>1\}}].
\end{align*}
The second series on the right side of~\eqref{eq:sllnseries} also converges. The proof is complete.
\end{proof}

\textbf{Acknowledgement.} The financial support of the DFG (German Science Foundation) SFB 823: Statistical modeling of nonlinear dynamic processes is gratefully acknowledged.

\bibliographystyle{abbrv}
\bibliography{bibliography}
\end{document}